\newtheorem{theorem}[equation]{Theorem}
\newtheorem{proposition}[equation]{Proposition}
\newtheorem{lemma}[equation]{Lemma}
\newtheorem{corollary}[equation]{Corollary}
\newtheorem{definition}[equation]{Definition}
\theoremstyle{definition}
\newtheorem{auxremark}[equation]{Remark}
\newenvironment{remark}{%
  \begin{auxremark}%
  }{%
   \hfill$\diamondsuit$%
    \end{auxremark}
  }
\newtheorem{example}[equation]{Example}
\numberwithin{equation}{section}
\newdimen\AAdi%
\newbox\AAbo%
\def\AAk#1#2{\setbox\AAbo=\hbox{#2}\AAdi=\wd\AAbo\kern#1\AAdi{}}%
\def\eqlabel#1{\def\@currentlabel{#1}}
\def\formula#1{\def\@tempa{#1}\let\@tempb\theequation\def\theequation{%
\hbox{#1}}\def\@currentlabel{(\theequation)}$$}
\def\endformula{\leqno\hbox{(\@tempa)}$$\@ignoretrue\let\theequation\@tempb}
\def\given{\hskip5\p@\relax\vrule\@width.4\p@\hskip5\p@\relax}
\newcommand{\open}[1]{%
\par\normalfont\topsep6\p@\@plus6\p@\trivlist\item[\hskip\labelsep\itshape#1%
\@addpunct{.}]\ignorespaces}
\DeclareRobustCommand{\close}[1]{%
  \ifmmode 
  \else \leavevmode\unskip\penalty9999 \hbox{}\nobreak\hfill
  \fi
  \quad\hbox{$#1$}}
\newlength{\toskip}\settowidth{\toskip}{(\theequation)}
\def\<{\langle}
\def\>{\rangle}
\def \R {{\mathbb R}}
\def \E {{\mathbb E}}
\def \L {{\mathbb L}}
\def \Var {\textrm{Var}}
\def \Ent {\textrm{Ent}}
\newcommand*{\dmu}{\,d\mu}
\begin{document}
\date{\today}

\title[Lyapunov and Hitting.]{Hitting times, functional inequalities, Lyapunov conditions and uniform ergodicity.}

 \author[P. Cattiaux]{\textbf{\quad {Patrick} Cattiaux $^{\spadesuit}$ \, \, }}
\address{{\bf {Patrick} CATTIAUX},\\ Institut de Math\'ematiques de Toulouse. CNRS UMR 5219. \\
Universit\'e Paul Sabatier,
\\ 118 route
de Narbonne, F-31062 Toulouse cedex 09.} \email{cattiaux@math.univ-toulouse.fr}

\author[A. Guillin]{\textbf{\quad {Arnaud} Guillin $^{\diamondsuit, \clubsuit}$}}
\address{{\bf {Arnaud} GUILLIN},\\ Laboratoire de Math\'ematiques, CNRS UMR 6620, Universit\'e Blaise Pascal,
avenue des Landais, F-63177 Aubi\`ere.} \email{guillin@math.univ-bpclermont.fr}

\maketitle

 \begin{center}

 \textsc{$^{\spadesuit}$  Universit\'e de Toulouse}
\smallskip

\textsc{$^{\diamondsuit}$ Universit\'e Blaise Pascal}
\smallskip

\textsc{$^{\clubsuit}$ Institut Universitaire de France}
\smallskip

\end{center}

\begin{abstract}
The use of Lyapunov conditions for proving functional inequalities was initiated in \cite{BCG}. It was shown in \cite{BBCG,CGZ} that there is an equivalence between a Poincar\'e inequality, the existence of some Lyapunov function and the exponential integrability of hitting times. In the present paper, we close the scheme of the interplay between Lyapunov conditions and functional inequalities by
\begin{itemize}
\item showing that strong functional inequalities are equivalent to Lyapunov type conditions;
\item showing that these Lyapunov conditions are characterized by the finiteness of generalized exponential moments of hitting times.
\end{itemize}
We also give some complement concerning the link between Lyapunov conditions and integrability property of the invariant probability measure and as such transportation inequalities, and we show that some ``unbounded Lyapunov conditions'' can lead to uniform ergodicity, and coming down from infinity property.
\end{abstract}
\bigskip

\textit{ Key words :}  Lyapunov functions, hitting times, uniform ergodicity, F-Sobolev inequalities, Poincar\'e inequality, ultracontractivity.
\bigskip

\textit{ MSC 2010 : 26D10, 39B62, 47D07, 60G10, 60J60} 
\bigskip

\section{Introduction}

Let $D$ be some smooth open domain in $\mathbb R^d$. In this paper, we will mainly consider the differential operator defined for smooth functions $f \in C^\infty(D)$ by $$Lf=\frac 12\sum_{ij}(\sigma \, \sigma^*)_{ij}(x) \, \frac{\partial^2_{ij}f}{\partial x_{ij}}+\sum_i b_i(x)\frac{\partial_i f}{\partial x_i} \, ,$$ where 
$\sigma$ is an $\R^{d\times m}$ smooth and bounded (for simplicity $C_b^\infty(\bar D)$) matrix field and $b$ a $C^\infty(\bar D)$ vector field. \\ We may see $L$ as the infinitesimal generator of a diffusion process associated to the stochastic differential equation (SDE) 
$$dX_t=\sigma(X_t)dB_t+b(X_t)dt \quad , \quad X_0=x \, ,$$
where $B_t$ is an usual $\R^m$-Brownian motions when $D=\mathbb R^d$, or to the reflected SDE $$dX_t=\sigma(X_t)dB_t+b(X_t)dt + dN_t , \quad  \int_0^t \, \mathbf 1_{\partial D}(X_s) dN_s = N_t \quad X_0=x \, ,$$ if $D$ is some smooth subdomain. \\ The domain $\mathcal D(L)$ of $L$ (viewed as a generator) is thus some extension of the set of smooth and compactly supported functions $C_c^\infty(\bar D)$ such that the normal derivative $\frac{\partial f}{\partial n}$ vanishes  on $\partial D$ (if $\partial D$ is non void). This corresponds to normal reflection or to a Neumann condition on the boundary. We also define  $P_t$ the associated semi-group $$P_tf(x)=\mathbb E_x(f(X_t))$$ which is defined for bounded functions $f$. \\ In order to use classical results in PDE theory we will also assume that $L$ is uniformly elliptic, i.e. $$\sigma \, \sigma^* \, \geq \, a \, Id$$ in the sense of quadratic forms for some $a>0$, or more generally that $L$ is uniformly strongly hypo-elliptic in the sense of Bony (see \cite{bony}) and that the boundary is non characteristic. For details we refer to \cite{catbord}. 
\medskip

\noindent We will also assume (though it should be a consequence of some of our assumptions) that there exists a probability measure $$\mu(dx) = e^{-V(x)} \, dx$$ which is an invariant measure for the process (or the semi-group) i.e. for all bounded and smooth function $f \in \mathcal D(L)$, $$\int \, Lf \, d\mu =0 \quad \textrm{ or equivalently for all $t$,} \quad \E_\mu(f(X_t)) = \int f \,  d\mu \, .$$ $P_t$ then extends to a contraction semi-group on $\mathbb L^p(\mu)$ for $1\leq p \leq +\infty$. We shall say that $\mu$ is symmetric, or that $P_t$ is $\mu$ symmetric if for smooth $f$ and $g$ in the domain of $L$, $$\int f \, Lg \, d\mu = \int \, Lf \, g \, d\mu \, .$$ The standard example of $\mu$-symmetric semi-group is obtained for $\sigma= \sqrt 2 \, Id$ and $b= - \nabla V$ (provided $V$ is smooth enough). In all cases our ellipticity assumptions imply that this measure is unique and ergodic.
\medskip

Among the most fascinating recent developments at the border of analysis and probability theory, a lot of work has been devoted to the study of the relationship between 
\begin{itemize} \item geometric properties of the measure $\mu$, for instance concentration properties, \item functional inequalities (the study of weighted Sobolev or Orlicz-Sobolev spaces associated to $L$ and $\mu$) like the Poincar\'e (Wirtinger) inequality or the Gross logarithmic Sobolev inequality, \item  transportation inequalities like the $T_2$ Talagrand's inequality, \item the rate of convergence to equilibrium for the semi-group $P_t$ in various functional spaces, $\mathbb L^2(\mu)$, Orlicz spaces related to $\mu$, \item the rate of convergence of its dual $P_t^*$ (i.e. the distribution of the process at time $t$) in total variation  or in Wasserstein distance, \item the existence of Lyapunov functions, \item and finally some properties of the stochastic process $X_.$ in particular existence of general moments for hitting times of some subsets (for instance the control of how the process comes down from infinity in the ultracontractive situation). \end{itemize}

We refer to the monographs of Davies \cite{Dav}, Ledoux \cite{ledoux01}, Wang \cite{Wbook} and Bakry-Gentil-Ledoux \cite{bakgl}, the surveys by Gross \cite{grossbook} and Ledoux \cite{ledoux}, the collective book \cite{logsob} and the papers \cite{aida,bakry-emery,BCR1,BCR2,BCR3,BK08,BL97,bobkov-gotze,bgl,cat4,CatGui1,CGG,LO00,OV} among many others, for the first four items.\\ For the last three items we refer to the monographs of Hasminskii \cite{Has} and Meyn-Tweedie \cite{MT} and the papers \cite{CKexit,DFG,DMT,GM,MT2,MT3,V97} among many others.
\medskip

The link between both approaches was done in \cite{BCG} for the first time, up to our knowledge. It was extended in \cite{BBCG,CatGui3,CGWW,CGW,CGW2}. One can see the (now outdated) survey \cite{CatGui4}. 
\medskip

\noindent To be a little bit more precise, let us recall the following result from \cite{CGZ} Theorem 2.3  (also see \cite{kulik} for a similar statement)
\begin{theorem}\label{thmpoinc}
Consider the following properties
\begin{enumerate}
\item[(HP1)] There exists a Lyapunov function $W$, i.e. a smooth function $W:D \to \R$, s.t. $W\geq w >0$, and there exist a constant $\lambda > 0$ and an open connected bounded subset $U$ such that $\frac{\partial W}{\partial n}= 0$ on $\partial D$ and 
$$ LW \, \leq \, - \,
\lambda \, W \, +  \mathbf 1_{\bar{U}} \, .$$
\item[(HP2)] There exist an open
connected bounded subset $U$ and a constant $\theta>0$ such that for all $x$, $$
\E_x\left(e^{\theta \, T_U}\right) < + \infty \, ,$$ where $T_U$ denotes the hitting time of $U$.
\item[(HP3)] The process is geometrically ergodic, i.e. there exist constants $\beta >0$ and $C>0$ and a
function $W\geq 1$ belonging to $\L^1(\mu)$ such that for all $x$
$$
 \Vert P_t(x,.) - \mu \Vert_{TV} \, \leq \, C \, W(x) \, e^{- \beta \, t} \, ,$$ where $P_t(x,.)$ denotes the distribution of $X_t$ (when $X_0=x$) and $\Vert .\Vert_{TV}$ denotes the total variation distance. 
\item[(HP4)] $\mu$ satisfies a Poincar\'e inequality, i.e. there exists a constant $C_P(\mu)$ such that
for all smooth $f \in \mathcal D(L)$,
$$\Var_\mu(f) \, \leq \, C_P(\mu) \, \mathcal E(f) \, ,$$ where $$\mathcal E(f)\,  =  \, \int \,  -Lf \, f \, d\mu \, = \, \frac 12 \, \int \, |\sigma \, \nabla f|^2 \dmu \, .$$
\item[(HP5)]
There exists a constant $\lambda_P(\mu)$ such that for all $f\in \L^2(\mu)$,$$
\Var_\mu(P_tf) \, \leq \, e^{-
\, \lambda_P(\mu) \, t} \, \Var_\mu(f) \, .$$
\end{enumerate}
Then $(HP5) \Leftrightarrow (HP4)$, $(HP4) \Rightarrow (HP2)$, $(HP2) \Leftrightarrow (HP1)$ and $(HP1) \Rightarrow (HP3)$. Actually $(HP4)$ implies $(HP2)$ for all (non-empty) open connected and bounded subset $U$.\\ If in addition $\mu$ is \underline{symmetric}, then $$(HP1) \Leftrightarrow (HP2) \Leftrightarrow (HP3) \Leftrightarrow (HP4) \Leftrightarrow (HP5) \, .$$
\end{theorem}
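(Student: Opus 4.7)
\medskip

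\textbf{Proof plan.} The plan is to establish the five claimed implications separately, beginning with the two classical spectral equivalences and then attacking the Lyapunov-hitting time equivalence before closing the loop under symmetry.

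For $(HP4)\Leftrightarrow(HP5)$ I would use the standard Gronwall argument. Writing $\mathrm{Var}_\mu(P_tf)=\|P_tf-\mu(f)\|_{\L^2(\mu)}^2$ and differentiating, one gets $\frac{d}{dt}\Var_\mu(P_tf)=-2\EE(P_tf,P_tf)$. Plugging in $(HP4)$ and integrating yields $(HP5)$ with $\lambda_P(\mu)=2/C_P(\mu)$. Conversely $(HP5)$ implies $(HP4)$ by differentiating at $t=0$ (using a spectral decomposition or the Stein-type inequality in the non-symmetric case).

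For $(HP1)\Leftrightarrow(HP2)$, the direction $(HP1)\Rightarrow(HP2)$ comes from Itô's formula applied to $e^{\lambda t}W(X_t)$: since $LW\le -\lambda W+\mathbf 1_{\bar U}$, the process $e^{\lambda(t\wedge T_U)}W(X_{t\wedge T_U})$ is, up to a martingale, a supermartingale outside $U$, hence $w\,\E_x(e^{\lambda T_U})\le W(x)$, giving $(HP2)$ with $\theta=\lambda$. For $(HP2)\Rightarrow(HP1)$, set $W(x):=\E_x(e^{\theta T_U})$ (or $W+1$ to ensure $W\ge w>0$). The strong Markov property shows $W$ satisfies $LW=-\theta W$ on $D\setminus\bar U$ and $W=1$ on $\partial U$, and $\frac{\partial W}{\partial n}=0$ on $\partial D$ is ensured by reflection. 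The main subtlety here is regularity: uniform ellipticity (or Bony's hypo-ellipticity) and non-characteristic boundary give $W\in C^\infty$ up to the boundary of $U$ by standard elliptic PDE theory, hence a genuine Lyapunov function with constant $\lambda=\theta$. The implication $(HP1)\Rightarrow(HP3)$ is then the Meyn-Tweedie machinery: a Lyapunov function combined with the ``small-set'' property inherited from ellipticity (local Dobrushin on compact sets) produces geometric ergodicity in total variation with rate dictated by $\lambda$.

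The implication $(HP4)\Rightarrow(HP2)$ for any non-empty connected bounded $U$ I would obtain by showing $\P_x(T_U>t)$ decays exponentially: one exhibits a function $g$ with $g=0$ on $U$, $\mu(g^2)<\infty$, and uses that $\int_{D\setminus U}P_t g\,d\mu$ decays exponentially by $(HP5)$, from which a covering/Chebyshev argument produces the exponential moment. Finally, to close the loop under symmetry I need $(HP3)\Rightarrow(HP4)$. The trick is the self-adjoint identity
\begin{equation*}
\|P_t f-\mu(f)\|_{\L^2(\mu)}^2=\int (f-\mu(f))\,(P_{2t}f-\mu(f))\,d\mu\le\|f\|_\infty\int|P_{2t}f-\mu(f)|\,d\mu,
\end{equation*}
which combined with $(HP3)$ and $W\in\L^1(\mu)$ gives exponential $\L^2$-decay on bounded centered functions, hence a spectral gap and $(HP4)$. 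The most delicate step in the whole scheme is the elliptic regularity needed for the hitting-time function $W$ in $(HP2)\Rightarrow(HP1)$, since this is what legitimises viewing a probabilistic exponential moment as an analytic Lyapunov function.
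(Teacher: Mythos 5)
Your plan handles the classical parts ($(HP4)\Leftrightarrow(HP5)$ by Gronwall, $(HP1)\Leftrightarrow(HP2)$ by It\^o's formula plus elliptic regularity for the hitting-time function, $(HP1)\Rightarrow(HP3)$ by Meyn--Tweedie) in essentially the same way the paper does, and these steps are fine. The self-adjoint identity you propose for the symmetric converse is a valid alternative to the route in the paper: the paper closes the loop by applying the method of \cite{BBCG} to pass directly from the Lyapunov condition $(HP1)$ plus a local Poincar\'e inequality to $(HP4)$, whereas you pass $(HP3)\Rightarrow(HP4)$ via $\|P_tf-\mu(f)\|^2_{\L^2}=\langle f-\mu(f),P_{2t}(f-\mu(f))\rangle\le \|f\|_\infty\int|P_{2t}f-\mu(f)|\,d\mu$ and integrate the total-variation bound against $W\in\L^1(\mu)$. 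That argument works but needs one more spectral step that you omit: from exponential $\L^2$-decay on bounded functions you must still conclude a genuine spectral gap, which requires a density argument (if there were spectral mass of $-L$ in $(0,\beta)$, one would find a bounded centered $f$ with nontrivial projection there, whose $\L^2$-decay would then violate the estimate).

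The genuine gap, however, is in your step $(HP4)\Rightarrow(HP2)$. You propose to pick $g$ vanishing on $U$ and deduce from $(HP5)$ that $\int_{D\setminus U}P_tg\,d\mu$ decays exponentially, then apply "a covering/Chebyshev argument" to get the exponential moment of $T_U$. This does not hold together. First, $(HP5)$ controls $\Var_\mu(P_tg)$, not $\int_{D\setminus U}P_tg\,d\mu$; by invariance $\int P_tg\,d\mu=\mu(g)$ for all $t$, so that quantity does not decay at all unless you already subtract the mean, and even then it is not the right object. Second, even a correct $\L^2(\mu)$ decay statement for the killed semigroup $P^U_t$ (which does follow from the Poincar\'e inequality restricted to functions vanishing on $U$ --- compare Lemma~\ref{lemequiva} in the paper) is an averaged bound and does not by itself give the pointwise estimate $\E_x(e^{\theta T_U})<\infty$ for every $x$; converting $\L^2$-decay of $\mathbb P_\cdot(T_U>t)$ into pointwise geometric decay requires a quantitative smoothing/regularity input that you do not supply. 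The paper avoids this difficulty entirely and proves the stronger implication $(HP4)\Rightarrow(HP1)$ directly: with $\phi=-c+\mathbf 1_A$ and $H=-L+\phi$, Lemma~\ref{lemequiva} shows that $\mathcal H(u,v)=\mu(u\,Hv)$ is coercive on $H^1(\mu)$, so Lax--Milgram produces $v$ solving $Hv=1$, and elliptic (or hypo-elliptic, non-characteristic boundary) regularity together with the minimum principle upgrades $v$ to a smooth, strictly positive Lyapunov function in $\mathcal D(L)$. You should either reproduce this Lax--Milgram construction or replace your sketch with a complete killed-semigroup argument including the pointwise upgrade; as written, the step is not a proof.
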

When $\mu$ is not symmetric, examples are known (kinetic diffusions) where $(HP1)$ (hence $(HP3)$) is satisfied but $(HP4)$ is not.  In the case of kinetic diffusions, it is evident that $(HP4)$ cannot hold as the Dirichlet form is degenerate. It is however also possible to build "monster" diffusion where the invariant probability measure has some polynomial tail but the diffusion (with identity diffusion matrix) may converge exponentially fast and thus a Lyapunov condition holds. \\

 The first equivalence is well known (see \cite{logsob}), the second one is a simple application of Ito's calculus and PDE results (see \cite{CGZ} and \cite{catbord}), the final implication is a consequence of the Meyn-Tweedie theory. The implication $(HP4) \Rightarrow (HP2)$ is shown in \cite{CGZ} by using the deviation results for the occupation measure obtained in \cite{CatGui2} (using a beautiful deviation result obtained in \cite{wu1}). We shall see in the next section another much more direct approach. Finally, in the symmetric case the converse implications are obtained by using the method in \cite{BBCG}.
\medskip

Some extensions of this theorem to polynomial ergodicity are discussed in \cite{CGZ} in connexion with weak Poincar\'e inequalities. A deeper study of this situation is done in \cite{ll,lll1,lll2}.
\bigskip

The questions we shall address in the present paper are not concerned with weakening but with reinforcing of the assumptions, that is, does it exist similar results as in Theorem \ref{thmpoinc} if we replace the Poincar\'e inequality by stronger inequalities, for instance $F$-Sobolev inequalities in the spirit of \cite{Aid98,BCR1,BCR3} ?
\medskip

Partial answers are known: an $F$-Sobolev inequality is equivalent to exponential stabilization in some Orlicz space (see \cite{RZ}) and in the symmetric situation, reinforced Lyapunov conditions imply super-Poincar\'e inequalities or $F$-Sobolev inequalities (see \cite{CGWW,CGW,CGW2}). Recently, Liu (\cite{liu1,liu2}) proposed some new ideas in order to directly link Lyapunov conditions on one hand, concentration properties or functional inequalities on the other hand. Though some aspects of his proofs are a little bit obscure for us, we shall follow his main idea in order to deduce a Lyapunov condition from a functional inequality, and then get equivalent results in terms of hitting times. This yields the following result written here for the logarithmic Sobolev inequality which is the best known $F$-Sobolev
\begin{theorem}\label{thmls}
Assume that $D$ is not bounded. Consider the following properties
\begin{enumerate}
\item[(HLS1)] There exists a Lyapunov function $W$, i.e. there exists a smooth function $W:D \to \R$, with  $W\geq w >0$, and there exist constants $\lambda > 0$ and $b>0$ such that $\frac{\partial W}{\partial n}= 0$ on $\partial D$ and 
$$ LW(x) \, \leq \, - \,
\lambda \, |x|^2 \, W(x) \, +  b \, .$$
\item[(HLS1')] There exists a Lyapunov function $W$, i.e. there exists a smooth function $W:D \to \R$, with $W\geq w >0$, and there exist constants $\lambda > 0$ and $b>0$ such that $\frac{\partial W}{\partial n}= 0$ on $\partial D$ and 
$$ LW(x) \, \leq \, - \,
\lambda \, V(x) \, W(x) \, +  b \, .$$
\item[(HLS2)] There exist an open
connected bounded subset $U$ and a constant $\theta>0$ such that for all $x$, $$
\E_x\left(\exp\left(\int_0^{T_U} \, \theta \, |X_s|^2 \, ds\right)\right) < + \infty \, ,$$ where $T_U$ denotes the hitting time of $U$.
\item[(HLS2')] There exist an open
connected bounded subset $U$ and a constant $\theta>0$ such that for all $x$, $$
\E_x\left(\exp\left(\int_0^{T_U} \, \theta \, V(X_s) \, ds\right)\right) < + \infty \, ,$$ where $T_U$ denotes the hitting time of $U$.
\item[(HLS4)] $\mu$ satisfies a logarithmic-Sobolev inequality, i.e. there exists a constant $C_{LS}(\mu)$ such that for all smooth $f \in \mathcal D(L)$,
$$Ent_\mu(f^2) \, := \, \int \, f^2 \, \ln\left(\frac{f^2}{\int \, f^2 \, d\mu}\right) \, d\mu \, \leq \, C_{LS}(\mu) \, \mathcal E(f) \, ,$$ where $$\mathcal E(f)\,  =  \, \int \,  -Lf \, f \, d\mu \, = \, \frac 12 \, \int \, |\sigma \, \nabla f|^2 \dmu \, .$$
\item[(HLS5)]
There exists a constant $C_E(\mu)$ such that for all $f^2\in \L\ln \L(\mu)$ s.t. $\int f^2 d\mu=1$,$$
Ent_\mu(P_t(f^2)) \, \leq \, e^{-
\, C_E(\mu) \, t} \, \Ent_\mu(f^2) \, .$$
\end{enumerate}
Then 
\begin{itemize}
\item[1)] \, $(HLS5) \Leftrightarrow (HLS4)$, $(HLS4) \Rightarrow (HLS1)$ and $(HLS2) \Leftrightarrow (HLS1)$. \\
\item[1')] \, Assume that $V$ goes to infinity at infinity and that there exists some $a>0$ such that $\mu(e^{aV})<+\infty.$\\ Then $(HLS5) \Leftrightarrow (HLS4)$, $(HLS4) \Rightarrow (HLS1')$ and $(HLS2') \Leftrightarrow (HLS1')$.
\end{itemize}
Actually $(HLS4)$ implies in both cases $(HLS2)$ or $(HLS2')$ for all open nice subset $U$.
\bigskip

Assume in addition that $\mu$ is \underline{symmetric} and that $\sigma.\sigma^*$ is uniformly elliptic.
\begin{itemize}
\item[2')] \, Assume that $V$ goes to infinity at infinity, that $|\nabla V(x)|\geq v >0$ for $|x|$ large enough and that there exists some $a>0$ such that $\mu(e^{aV})<+\infty.$ \\ Then $(HLS1') \Leftrightarrow (HLS2') \Leftrightarrow (HLS4) \Leftrightarrow (HLS5)$.\\
\item [2)] \, Assume the curvature condition $Ric + Hess V \geq - C > - \infty$ where Ricci and Hess are related to the riemanian metric defined by $\sigma$. \\ Then $(HLS1) \Leftrightarrow (HLS2) \Leftrightarrow (HLS4) \Leftrightarrow (HLS5)$.
\end{itemize}
\end{theorem}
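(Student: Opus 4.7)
The plan is to split the theorem into three (mostly independent) blocks. The equivalence $(HLS5) \Leftrightarrow (HLS4)$ is the classical Gross--Rothaus theorem relating the log-Sobolev inequality to exponential entropy decay, so we only quote it. The equivalences $(HLS1) \Leftrightarrow (HLS2)$ and $(HLS1') \Leftrightarrow (HLS2')$ are proved exactly as the corresponding $(HP1) \Leftrightarrow (HP2)$ statement in Theorem \ref{thmpoinc}: given the Lyapunov condition with $\phi = |\cdot|^2$ or $\phi = V$, one applies It\^o's formula to $Y_t = e^{\lambda \int_0^t \phi(X_s)\, ds}\, W(X_t)$, observes that the drift in $dY_t$ is nonpositive outside $\bar U$ by the Lyapunov inequality, stops at $T_U \wedge t$, and passes to the limit via Fatou to get the Feynman--Kac moment bound. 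The converse uses $W_\theta(x) := \E_x \exp\bigl(\theta \int_0^{T_U} \phi(X_s)\, ds\bigr)$, which by Dynkin's formula and the strong Markov property satisfies $(L+\theta\phi) W_\theta = 0$ on $D \setminus \bar U$; after smoothing and truncating, absorbing the defect on $\bar U$ into an additive constant $b$, we recover the Lyapunov inequality.

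The core novelty is the direct implication $(HLS4) \Rightarrow (HLS1)$ (and its $V$-variant), for which we follow the strategy of Liu \cite{liu1,liu2}. The first input is Herbst's argument applied to (HLS4), which yields $\mu(e^{\alpha |x|^2}) < +\infty$ for some $\alpha > 0$ (the analogous tail for $V$ in case 1' is part of the hypothesis). The key algebraic step is to plug $f^2 = W := e^{\varepsilon \phi}$ into (HLS4), expand $\Ent_\mu(W)$, and compare with $\int (-LW/W)\, W\, d\mu$ obtained from the Dirichlet energy by integration by parts. The chain rule produces an extra $\tfrac14 |\sigma \nabla \ln W|^2 W$ contribution that must be dominated by $\phi W$ itself; this is possible precisely when $\varepsilon$ is chosen small enough, and this is where the quadratic (resp.\ linear-or-faster) growth of $\phi$ is crucial. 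Localizing on $\{|x|\leq R\}$ and passing to a pointwise version via elliptic regularity of the resolvent then yields $LW \leq -\lambda \phi W + b$.

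For parts 2 and 2', we still need the reverse $(HLS1) \Rightarrow (HLS4)$ (resp.\ $(HLS1') \Rightarrow (HLS4)$) in the symmetric uniformly elliptic setting. The scheme is the BBCG decomposition from \cite{BBCG,CGWW,CGW}: starting from a smooth $f$ with $\int f^2\, d\mu = 1$, use the pointwise inequality
\[
\phi \,\leq\, \frac{-LW}{\lambda\, W} \,+\, \frac{b}{\lambda\, W}\, \mathbf 1_{\bar U},
\]
multiply by $f^2$, integrate against $\mu$, and rewrite $\int f^2 (-LW/W)\, d\mu$ using symmetry as a Dirichlet-type energy controlled by $\mathcal E(f)$ plus a lower order term. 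The contribution on $\bar U$ is controlled by a local log-Sobolev inequality, which holds by Bakry--\'Emery under the curvature bound $\mathrm{Ric} + \mathrm{Hess}\, V \geq -C$ in case 2, and by a Holley--Stroock perturbation combined with $|\nabla V| \geq v > 0$ and $\mu(e^{aV}) < +\infty$ in case 2'. This produces a defective inequality $\Ent_\mu(f^2) \leq C\, \mathcal E(f) + D\, \|f\|_2^2$, which we tighten via Rothaus's lemma using the Poincar\'e inequality already guaranteed by $(HLS1)$ through Theorem \ref{thmpoinc}. I expect the main obstacle to be this tightening step in case 2': since $V$ can grow strictly slower than $|x|^2$, the intermediate inequality may only be of $F$-Sobolev type, and the extra assumptions on $|\nabla V|$ and on $\mu(e^{aV})$ are exactly what permits the upgrade to a genuine log-Sobolev inequality.
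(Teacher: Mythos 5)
Your handling of $(HLS5)\Leftrightarrow(HLS4)$ (Gross--Rothaus, quoted) and of $(HLS1)\Leftrightarrow(HLS2)$, $(HLS1')\Leftrightarrow(HLS2')$ (It\^o/Feynman--Kac on $e^{\theta\int_0^t\phi(X_s)ds}W(X_t)$, plus the strong Markov identity $(L+\theta\phi)W_\theta=0$ off $\bar U$ for the converse) is correct and matches the paper, and so does the Herbst integrability step $\mu(e^{\alpha|x|^2})<\infty$.

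The genuine gap is in $(HLS4)\Rightarrow(HLS1)$. You propose the explicit ansatz $W=e^{\varepsilon\phi}$ and hope to conclude $LW\leq-\lambda\phi W+b$ pointwise after ``passing to a pointwise version via elliptic regularity.'' This cannot work: for a fixed explicit $W$ the quantity $LW/W=\varepsilon L\phi+\tfrac{\varepsilon^2}{2}|\sigma\nabla\phi|^2$ is determined by the \emph{coefficients} of $L$ (in particular the drift), not by the invariant measure. For $\phi=|x|^2$ you would need $b(x)\cdot x\lesssim-c|x|^2$ at infinity, which is a structural hypothesis and is not a consequence of (HLS4). Plugging $f^2=W$ into the log-Sobolev inequality produces an \emph{integral} constraint (which holds), but there is no mechanism by which an integral inequality for one explicit function becomes a pointwise drift inequality for that same function. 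The paper's proof (Proposition~\ref{proplslyap}) is genuinely constructive in a different way: it takes $\phi=\rho(-h+b)$ with $b=2\mu(e^h)$, forms $H=-L+\phi$, proves that the bilinear form $\mathcal H(u,v)=\mu(u\,Hv)$ is coercive on $H^1(\mu)$ --- via the entropy--Young inequality $\mu(hu^2)\leq\mu(u^2)\mu(e^h)-\mu(u^2)+\Ent_\mu(u^2)$ followed by (HLS4) to absorb $\Ent_\mu(u^2)$ into $\mathcal E(u)$ --- and then invokes Lax--Milgram to solve $Hv=1$. Positivity of $v$ (minimum principle via It\^o as in Theorem~\ref{thmpoinlyap}) then gives the Lyapunov function. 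The Lyapunov $W$ is the solution of a resolvent-type equation, not an exponential of $\phi$.

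For the converse $(HLS1')\Rightarrow(HLS4)$ (case 2') your BBCG-type start is the right one, but the localization step differs from the paper and you slightly misassign the role of $|\nabla V|\geq v>0$: the paper does not use a local log-Sobolev plus Holley--Stroock on $\bar U$, it instead uses the Nash-type local inequality (3.1.4) of \cite{CGWW} on the sublevel sets $A_r=\{V\leq r\}$, for which the gradient lower bound on $V$ is needed, obtaining a super-Poincar\'e inequality $\mu(f^2)\leq s\,\mathcal E(f)+Ce^{\tilde c/s}(\mu|f|)^2$, hence a defective LSI, then tightens with Rothaus via the Poincar\'e inequality coming from $(HP1)$. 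Both routes are plausible and both end with Rothaus tightening, so this part is a legitimate variant rather than an error, though you should be explicit about what local inequality you use and why the hypothesis on $|\nabla V|$ enters. For case 2 (curvature) the paper does not reprove the implication: it cites \cite{CGW} (transportation inequalities) and mentions \cite{liu2}; your Bakry--\'Emery-based alternative is reasonable but not the paper's argument.
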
  
Except  $(HLS5) \Leftrightarrow (HLS4)$ which is well known, we will prove this Theorem (and actually more general results) in section \ref{secls}. Part of this result can be extended to general $F$-Sobolev inequalities, this is done in section \ref{secFs}.
\medskip

The next two sections are devoted to the rate of convergence of $P_t(\nu,.)$, the distribution at time $t$ of the process with initial distribution $\nu$, to the invariant distribution $\mu$ for the total variation distance. In section \ref{seclp} Theorem \ref{thmconvvartot} we get the following: under some natural assumptions on $\mu$, almost any $F$-Sobolev inequality combined with the Poincar\'e inequality provides an exponential convergence $$\Vert P_t(\nu,.) - \mu \Vert_{TV} \leq \, C(\nu) \, e^{- \, \beta \, t} \, ,$$ for all $\nu$ absolutely continuous w.r.t. $\mu$ such that $\frac{d\nu}{d\mu}$ belongs to $\mathbb L^p(\mu)$ for some $p>1$. The remarkable fact here is that $\beta$ does not depend on the integrability property (the $p$) of $d\nu/d\mu$. If this and more general results were proved in \cite{CatGui3}, the proof given here is particularly simple and understandable. \\ In the next section \ref{secomdown} we study $\mathbb L^\infty$ properties of the Lyapunov functions, in relation with the property of ``coming down from infinity'' for the process. In particular we show that if the ultra boundedness property of the semi-group implies the ``coming down from infinity'' property for the process, the converse is not true. All these notions are particularly relevant in the study of quasi-stationnary distributions.
\medskip

Finally in the last section we directly rely general Lyapunov condition to the existence of some exponential moments for the measure $\mu$, extending the results in \cite{liu1}.
\bigskip

\section{Back to the Poincar\'e inequality.}\label{secpoinc}

As we said in the introduction, we shall give here a new direct proof of $(HP4) \Rightarrow (HP1)$ in Theorem \ref{thmpoinc}.
\medskip

\begin{theorem}\label{thmpoinlyap}
Assume that $\mu$ satisfies a Poincar\'e inequality with constant $C_P(\mu)$. Then for all open subset $A$, there exists a smooth Lyapunov function $W \in \mathcal D(L)$ i.e. a smooth function satisfying $W\geq w>0$ on $A^c$ and $LW \leq -c \, W$ on $A^c$ with $$c = \mu(A) \min\left(\frac{1}{4 C_P(\mu)} \, , \, \frac{1}{8}\right) \, .$$ 
\end{theorem}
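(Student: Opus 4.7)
The natural candidate is the Feynman--Kac object $W(x):=\E_x[\exp(c\,T_A)]$; by the strong Markov property it formally satisfies $LW=-cW$ on $A^c$ and $W=1$ on $\partial A$, so once it is finite, smooth, and bounded below by a positive constant it is a Lyapunov function on $A^c$. Equivalently I would build it analytically as $W=1+u$ where $u$ is the weak solution of the mixed boundary value problem $(L+c)u=-c$ on $A^c$, $u=0$ on $\partial A$, $\partial_n u=0$ on $\partial D\cap A^c$, and then match the two representations via Dynkin's formula.

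\textbf{Restricted Poincar\'e inequality.} For a smooth $f$ vanishing on $A$, Cauchy--Schwarz gives $(\int f\dmu)^2\le\mu(A^c)\int f^2\dmu$; combined with $\Var_\mu(f)\le C_P(\mu)\,\mathcal E(f)$ this yields
$$\mu(A)\int f^2\dmu\;\le\;C_P(\mu)\,\mathcal E(f).$$
Hence the mixed Dirichlet-on-$\partial A$/Neumann-on-$\partial D$ spectrum of $-L$ on $A^c$ has spectral gap at least $\mu(A)/C_P(\mu)$.

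\textbf{Lax--Milgram construction.} The bilinear form $(u,v)\mapsto\mathcal E(u,v)-c\int uv\dmu$ on $\{u\in H^1:u|_{\partial A}=0\}$ is coercive as soon as $c\,C_P(\mu)/\mu(A)<1$, and the choice $c\le\mu(A)/(4C_P(\mu))$ preserves a clean lower bound $\ge\tfrac34\mathcal E(u)$; this produces a unique weak solution $u$. Under the (hypo-)elliptic smoothing hypotheses of the paper, $W:=1+u$ is smooth in the interior of $A^c$, matches the Neumann condition on $\partial D$, and satisfies $LW=-cW$ on $A^c$.

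\textbf{Positivity, the role of $\mu(A)/8$, and main obstacle.} Dynkin's formula applied to $\exp(c(T_A\wedge t))\,W(X_{T_A\wedge t})$, combined with a.s.\ finiteness of $T_A$ from ergodicity and the uniform integrability provided by the $L^2(\mu)$-bound on $W$, yields the Feynman--Kac identity $W(x)=\E_x[\exp(cT_A)]\ge1$, and in particular $W\ge w:=1$ on $A^c$. The absolute ceiling $c\le\mu(A)/8$ becomes active when $C_P(\mu)$ is small: it controls a Markov-iteration / semigroup-smoothing step that upgrades the $L^2(\mu)$-finiteness of $W$ to an everywhere-finite smooth representative by forcing an underlying geometric series (in iterates of $P_1$) to converge. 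Finally I would glue $W$ smoothly across $\partial A$ with a cutoff to obtain $W\in\mathcal D(L)$ globally, preserving the Lyapunov inequality on $A^c$. The main obstacle is precisely this quantitative bootstrap: Lax--Milgram yields only $L^2(\mu)$-existence, and promoting that to a pointwise-finite smooth Lyapunov function with the explicit constants $\mu(A)/(4C_P(\mu))$ and $\mu(A)/8$ requires careful bookkeeping; everything else is standard elliptic regularity and It\^o calculus.
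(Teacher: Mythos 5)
Your construction differs from the paper's in one essential way: you impose a hard Dirichlet condition $u|_{\partial A}=0$ and run Lax--Milgram on the corresponding subspace, while the paper keeps all of $H^1(\mu)$ and uses the soft penalty potential $\phi=-c+\mathbf 1_A$. Your restricted Poincar\'e inequality $\mu(A)\int f^2\,d\mu\le C_P(\mu)\,\mathcal E(f)$ for $f$ vanishing on $A$ is the hard-constraint counterpart of the paper's Lemma \ref{lemequiva}, which necessarily retains the extra term $\frac{4}{\mu(A)}\int_A f^2\,d\mu$ because no vanishing is imposed. That extra term is exactly where the paper's second ceiling $c\le\mu(A)/8$ comes from, in the coercivity estimate $\mu(u\,Hu)\ge\tfrac12\bigl(\mathcal E(u)+\int_A u^2\,d\mu\bigr)$: it is pure bookkeeping of the soft-penalty formulation. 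In your Dirichlet setup that ceiling plays no role at all --- coercivity already holds for any $c<\mu(A)/C_P(\mu)$ --- so your proposed explanation of $\mu(A)/8$ as driving a ``Markov-iteration / semigroup-smoothing'' bootstrap is a misdiagnosis; the paper performs no such iteration, and the theorem would in fact hold with your larger admissible $c$. (Your route also requires enough regularity of $\partial A$ for traces to make sense, whereas the soft-penalty version is meaningful for arbitrary open $A$ with $\mu(A)>0$.)

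The genuine gap is the positivity step, which as written is circular. You want $W\ge 1$ on $A^c$ via the Feynman--Kac identity $W(x)=\E_x[e^{cT_A}]$, but Dynkin's formula only gives $W(x)=\E_x[e^{cT_A}\mathbf 1_{T_A\le t}]+\E_x[e^{ct}W(X_t)\mathbf 1_{T_A>t}]$, and you must kill the second term as $t\to\infty$. Without already knowing the sign of $W$ (which is what you are trying to prove) you cannot drop it by positivity, and the claimed $L^2(\mu)$-bound does not supply the decay: $W=1+u$ does not vanish on $\partial A$, so the killed-semigroup spectral gap does not apply to $W$, and an $L^2$ decay of the killed semigroup would not yield the pointwise control you need without additional elliptic work in any case. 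The step the paper actually performs, and which your argument is missing, is a variational truncation: test the Lax--Milgram identity against $u_-=\min(u,0)$, use coercivity on the left and $c\int u_-\,d\mu\le 0$ on the right, and conclude $u_-=0$, hence $u\ge 0$ and $W\ge 1$ pointwise with no stochastics. The It\^o/Dynkin step in the paper then merely upgrades this to a uniform positive lower bound on compacta strictly away from $A$. Without the truncation argument your proof does not close.
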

It is easily seen that if $A$ is smooth an bounded, we can modify $W$ to get $(HP1)$ (with a $\lambda$ smaller than the $c$ in the theorem).
\begin{proof}
Let start with a simple lemma. 
\begin{lemma}\label{lemequiva}
Assume that $\mu$ satisfies a Poincar\'e inequality with constant $C_P(\mu)$. Then for all subset $A$ such that $\mu(A)>0$ and all $f$ in $H^1(\mu)=\mathcal D(\mathcal E)$ it holds $$ \int \, f^2 \, d\mu \leq   \, \frac{2 C_P(\mu)}{\mu(A)} \, \mathcal E(f) \, + \, \frac{4}{\mu(A)} \, \int_A \, f^2 \, d\mu \, .$$
\end{lemma}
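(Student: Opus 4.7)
The plan is to introduce the conditional average $c := \mu(A)^{-1}\int_A f\,d\mu$ of $f$ over $A$, and to exploit the fact that $(f-c)$ has zero mean \emph{on $A$}, so the global mean of $f-c$ lives entirely on $A^c$, which is a strong restriction when $\mu(A)$ is not too small.

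First I would split $\int f^2\,d\mu \leq 2\int(f-c)^2\,d\mu + 2c^2$ (using $(a+b)^2\leq 2a^2+2b^2$), and bound the cross term trivially: Cauchy--Schwarz gives
\[
c^2 \;=\; \frac{1}{\mu(A)^2}\Bigl(\int_A f\,d\mu\Bigr)^2 \;\leq\; \frac{1}{\mu(A)}\int_A f^2\,d\mu.
\]
So it remains to control $\int (f-c)^2\,d\mu$ in terms of $\mathcal{E}(f)$ alone, and the factor $1/\mu(A)$ must appear from this step.

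Next I would apply the Poincar\'e inequality to $f-c$ (recalling $\mathcal{E}(f-c)=\mathcal{E}(f)$) in the form $\int(f-c)^2\,d\mu = \Var_\mu(f-c)+\bigl(\int(f-c)\,d\mu\bigr)^2 \leq C_P(\mu)\,\mathcal{E}(f)+\bigl(\int(f-c)\,d\mu\bigr)^2$. The key observation is that, by the choice of $c$, $\int_A(f-c)\,d\mu=0$, hence $\int(f-c)\,d\mu = \int_{A^c}(f-c)\,d\mu$. Cauchy--Schwarz on $A^c$ then yields
\[
\Bigl(\int(f-c)\,d\mu\Bigr)^2 \;\leq\; \mu(A^c)\int_{A^c}(f-c)^2\,d\mu \;\leq\; \mu(A^c)\int(f-c)^2\,d\mu.
\]
Reinserting this into the previous inequality produces the self-improving bound $\int(f-c)^2\,d\mu \,(1-\mu(A^c))\leq C_P(\mu)\mathcal{E}(f)$, i.e.\ $\int(f-c)^2\,d\mu \leq \mu(A)^{-1}C_P(\mu)\,\mathcal{E}(f)$, which is exactly the right $1/\mu(A)$ blow-up.

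Putting the two pieces together then gives
\[
\int f^2\,d\mu \;\leq\; \frac{2C_P(\mu)}{\mu(A)}\mathcal{E}(f) \;+\; \frac{2}{\mu(A)}\int_A f^2\,d\mu,
\]
which is (slightly sharper than, but implies) the stated inequality with constants $2C_P(\mu)/\mu(A)$ and $4/\mu(A)$. There is no serious obstacle here; the only delicate point is the self-referential closure on $\int(f-c)^2\,d\mu$, which crucially relies on $c$ being the \emph{conditional} mean on $A$ (so that the discrepancy between the global and conditional means is supported on $A^c$), and on $\mu$ being a probability so that the two Cauchy--Schwarz applications produce the factors $\mu(A^c)$ and $\mu(A)$ that eventually combine into the $1/\mu(A)$ coefficient.
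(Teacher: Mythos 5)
Your proof is correct, and in fact slightly sharper than the paper's statement: you obtain the constant $2/\mu(A)$ in front of $\int_A f^2\,d\mu$ where the paper only claims $4/\mu(A)$. Both proofs share the same skeleton (Poincar\'e on the variance plus Cauchy--Schwarz to control the squared mean, leading to a self-referential bound that is absorbed because $1-\mu(A^c)=\mu(A)>0$), but you take a genuinely different route to get there. The paper works with $f$ directly, writes $\mu(f)=\mu(f\mathbf{1}_A)+\mu(f\mathbf{1}_{A^c})$, applies the parameterized Young inequality $(a+b)^2\leq(1+\lambda)a^2+(1+1/\lambda)b^2$, applies Cauchy--Schwarz to each piece, and then must optimize over $\lambda$ (choosing $\lambda=2(1-\mu(A))/\mu(A)$) to obtain usable constants. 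You instead center $f$ at its conditional average $c=\mu(A)^{-1}\int_A f\,d\mu$, so that the mean of $f-c$ is supported entirely on $A^c$; this replaces the paper's $\lambda$-weighted split by the plain $(a+b)^2\leq 2a^2+2b^2$, eliminates the optimization step entirely, and improves the constant because the $A$-part of the mean is cancelled exactly rather than estimated. Your argument is cleaner and self-contained; the only caveat worth spelling out is that the rearrangement $(1-\mu(A^c))\int(f-c)^2\,d\mu\leq C_P(\mu)\mathcal{E}(f)$ requires the finiteness of $\int(f-c)^2\,d\mu$, which holds since $f\in H^1(\mu)\subset\mathbb{L}^2(\mu)$ and $c$ is finite by Cauchy--Schwarz.
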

\begin{proof}
Using Poincar\'e inequality and the elementary $$(a+b)^2 \leq (1+\lambda) a^2 + \left(1 +\frac 1 \lambda\right) \, b^2 \quad \textrm{ for all $\lambda >0$} $$ we can write
\begin{eqnarray*}
\int \, f^2 \, d\mu &\leq& C_P(\mu) \, \mathcal E(f) \, + \, \mu^2(f) \\ &\leq& C_P(\mu) \, \mathcal E(f)\, + \mu^2(f \mathbf 1_A + f \mathbf 1_{A^c})\\ &\leq& C_P(\mu) \, \mathcal E(f)\, + (1+\lambda) \, \mu^2(f \mathbf 1_A) + \frac{1+\lambda}{\lambda} \,  \mu^2(f \mathbf 1_{A^c}) \\ &\leq& C_P(\mu) \, \mathcal E(f)\, + (1+\lambda) \, \mu(A) \,  \int_A \, f^2 \, d\mu + \frac{1+\lambda}{\lambda} \,  \mu(f^2) \mu(A^c)
\end{eqnarray*}
where we have used Cauchy-Schwartz in the final inequality. Hence provided
$$\mu(A)(1+\lambda) - 1 \, > \, 0 \, ,$$ we have obtained $$\int \, f^2 \, d\mu \leq \frac{\lambda \, C_P(\mu)}{\mu(A)(1+\lambda)-1} \, \mathcal E(f)\, + \frac{\lambda (1+\lambda) \, \mu(A)}{\mu(A)(1+\lambda)-1} \,  \int_A \, f^2 \, d\mu \, .$$ The result follows by choosing $\lambda = 2 (1-\mu(A))/\mu(A)$.
\end{proof}
Now define 
\begin{equation}\label{eqphipoinc}
\phi(x)= - c + \, \mathbf 1_A(x) \, ,
\end{equation}
and introduce for all smooth $u \in \mathcal D(L)$, $$Hu = -Lu+\phi \, u \, .$$ On one hand, it holds $$\mu(u.Hu) \leq \mathcal E (u) + \, \int_A \, u^2 \, d\mu \, .$$ On the other hand, applying the previous lemma we have
\begin{eqnarray*}
\mu(u.Hu) &=&  \mathcal E (u) + \, \int_A \, u^2 \, d\mu - c \mu(u^2)\\ &\geq&  \mathcal E (u) + \, \int_A \, u^2 \, d\mu - c \, \left(\frac{2 C_P(\mu)}{\mu(A)} \, \mathcal E(u) \, + \, \frac{4}{\mu(A)} \, \int_A \, u^2 \, d\mu\right)\\ &\geq& \frac 12 \left(\mathcal E (u) + \, \int_A \, u^2 \, d\mu\right)
\end{eqnarray*}
if we choose $$c = \mu(A) \min\left(\frac{1}{4 C_P(\mu)} \, , \, \frac{1}{8}\right) \, .$$ 
\medskip

Now we will linearize $\mu(u.Hu)$. If $v \in \mathcal D(L)$ and $u \in H^1(\mu)$, $\mathcal H(u,v)= \mu(u. Hv)$ is well defined, and using an integration by parts (or the Green-Rieman formula since the normal derivative of $v$ at the boundary vanishes) can be written as a (non necessarily symmetric) bilinear form on $H^1(\mu)$. It is easily seen that this bilinear form $\mathcal H$ is continuous on $H^1(\mu)$ equipped with the (usual) hilbertian norm $\left(\mathcal E(u) + \int \, u^2 \, d\mu\right)^{\frac 12}$, hence equipped with the hilbertian norm $\left(\mathcal E(u) + \int_A \, u^2 \, d\mu\right)^{\frac 12}$ which is equivalent according to Lemma \ref{lemequiva}. But according to what precedes, $\mathcal H$ is also coercive for this norm. \\
 Hence, we may apply the Lax-Milgram theorem which tells us that for any $g \in H^1(\mu)$ there exists some $v \in H^1(\mu)$ such that for all $u$, $\mathcal H(u,v)=\langle u,g \rangle$. We will use this result with $g \equiv 1$.\\ First of all, the previous relation with $u$ compactly supported in $D$ shows that $Hv=g$ in $\mathcal D'(D)$, so that thanks to ellipticity (or hypo-ellipticity), $v \in C^\infty(D)$ and satisfies $Hv=g$ in the usual sense in $D$. When $D=\mathbb R^d$ this is enough. Otherwise, since the boundary is non characteristic, $v$ admits sectional traces on $\partial D$ of any order (see \cite{catbord} Theorem 4.6) and using the results in \cite{catbord} section 4, one can see that $v \in C^\infty(\bar D)$ and satisfies $\partial_n v=0$ on $\partial D$. Since $Hv \in \mathbb L^2(\mu)$, it follows that $v \in \mathcal D(H)$. \\ As a routine, defining $v_-=\min(v,0)$, one can check using integration by parts or the Green-Rieman formula, that $\mu(v_-.Lv)=\mathcal E(v_-)$ so that, using the previous lower bound we obtain,
\begin{eqnarray*}
\mu(v_-.g)&=& \mathcal H(v_-,v)= \mathcal \mu(v_-.Hv) \, = \, \mathcal \mu(v_-.Lv) \, + \, \mu(\phi. v . v_-) = - \, \mathcal E(v_-) \, - \, \mu(\phi.  v^2_-) \\ &\leq& \, - \, \frac 12 \, \left(\mathcal E (v_-) + \, \int_A \, v_-^2 \, d\mu\right)
\end{eqnarray*}
and $v_-=0$ $\mu$ since $g>0$. So $v\geq 0$ almost surely. \\ One should now use the maximum principle but we prefer use Ito's formula. Assume that $A$ is open and bounded. Since $\phi= -c$ on $A^c$, we get for any $x \in A^c$, 
\begin{eqnarray*}
v(x)&=&\E_x(v(X_{t\wedge T_A}) + \E_x \left(\int_0^t \, \mathbf 1_{s\leq T_A} \, (Hv-\phi \, v)(X_s) \, ds \right)\\ &\geq&  \, \E_x(t\wedge T_A) > \, 0 \, 
\end{eqnarray*}
since $T_A>0$ if $x \in A^c$. Replacing $A$ by $A_\varepsilon= \{y, d(y,A)<\varepsilon\}$ we may finally let $t$ go to infinity, use the fact that $v$ is bounded from below by a positive constant $v(A,\varepsilon)$ on $\partial A_\varepsilon$ which is compact and obtain that $v(x)\geq v(A,\varepsilon)$ for all $x\in A_\varepsilon^c$ using the previous inequality (which actually furnishes exactly the minimum principle). For a general open set $A$ just take the intersection with a large ball to get a bounded subset.
\end{proof}
\begin{remark}\label{remdomL}
As soon as we know the existence of a Lyapunov function satisfying $(HP1)$ it immediately follows using Ito's formula with the function $(t,y) \mapsto e^{\lambda t}  \, W(y)$ that for all $x$, $$W_U(x)=\E_x(e^{\lambda \, T_U}) < +\infty \, ,$$ and conversely, if the exponential moment is finite, $W_U$ is a Lyapunov function (see \cite{CGZ}). \\ Also notice that the proof of Theorem \ref{thmpoinlyap} furnishes a Lyapunov function in $H^1(\mu)$, hence in $\mathcal D(L)$ since $Lv=(\mathbf 1_A-c)v -1$ implies $Lv \in \mathbb L^2(\mu)$. Hence the previous $W_U$ belongs to $\mathcal D(L)$. Conversely if $W_U$ is finite, $LW_U=-\lambda W_U$ in $U^c$. Replacing $\lambda$ by $\lambda/2$ if necessary, we may assume that $W_U$ is in $\mathbb L^2(\mu)$ so that $LW_U$ is square integrable too (at least in $U^c$). If $U$ is relatively compact it is easy to see that one can modify $U$ and $W_U$ to get a smooth function everywhere that belongs to $\mathcal D(L)$. 
\end{remark}

\section{The logarithmic Sobolev inequality.}\label{secls}

We start with an analogue of Theorem \ref{thmpoinlyap}.
\begin{proposition}\label{proplslyap}
Assume that $\mu$ satisfies the logarithmic Sobolev inequality $(HLS4)$. Let $h$ be a non-negative continuous function such that $b=2 \mu(e^h) <+\infty$. For $\varepsilon >0$, define $U_\varepsilon(h)=\{(1-\varepsilon)h >b\}$. \\ Then there exists a Lyapunov function $W \in \mathcal D(L)$ such that $W(x) \geq w_\varepsilon >0$ on $U_\varepsilon(h)$ and $$(HLh) \quad LW \leq - \frac{\varepsilon}{2 \, C_{LS}} \, h \, W \quad \textrm{ on $U_\varepsilon(h)$.} $$
\end{proposition}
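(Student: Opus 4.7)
The plan is to mimic the Lax--Milgram strategy of Theorem \ref{thmpoinlyap}, with the Donsker--Varadhan dual variational formula for entropy playing the role Cauchy--Schwarz played in the Poincar\'e case; this is precisely where the hypothesis $\mu(e^h)=b/2<\infty$ enters. First I would establish the analogue of Lemma \ref{lemequiva}: for every $u\in \mathcal D(\mathcal E)$,
\[\int u^2\,d\mu \;\leq\; A\,\mathcal E(u)\;+\;B\int_{U_\varepsilon(h)^c} u^2\,d\mu,\]
with explicit constants $A,B$ depending only on $\varepsilon,\,C_{LS},\,b$. Indeed, the dual entropy formula gives
\[\int h u^2\,d\mu \;\leq\; \Ent_\mu(u^2) + \mu(u^2)\log \mu(e^h) \;\leq\; C_{LS}\,\mathcal E(u) + \log(b/2)\,\mu(u^2),\]
by $(HLS4)$; then splitting $\mu(u^2)=\int_{U_\varepsilon(h)}u^2 d\mu+\int_{U_\varepsilon(h)^c}u^2 d\mu$, using the pointwise bound $u^2 \leq \frac{1-\varepsilon}{b}\,h u^2$ on $U_\varepsilon(h)$, and absorbing via $\log(b/2)\leq b/2$ (valid because $h\ge 0$ forces $b\ge 2$) produces the claimed split-norm inequality. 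In particular the norm $(\mathcal E(u)+\int_{U_\varepsilon(h)^c}u^2 d\mu)^{1/2}$ is equivalent to the usual $H^1(\mu)$ norm.

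Set $\phi = -\frac{\varepsilon}{2C_{LS}}h + C_0\,\mathbf 1_{U_\varepsilon(h)^c}$ with $C_0$ to be tuned, and $Hu = -Lu+\phi u$. The same Donsker--Varadhan+LSI bound yields
\[\mu(u\,Hu) \;\geq\; \Bigl(1-\tfrac{\varepsilon}{2}\Bigr)\mathcal E(u)\;-\;\tfrac{\varepsilon\log(b/2)}{2C_{LS}}\,\mu(u^2)\;+\;C_0\!\int_{U_\varepsilon(h)^c}u^2\,d\mu,\]
and after eliminating $\mu(u^2)$ via the split-norm inequality, the right-hand side dominates $\kappa\bigl(\mathcal E(u)+\int_{U_\varepsilon(h)^c}u^2 d\mu\bigr)$ for some explicit $\kappa>0$ once $C_0$ is chosen large enough in terms of $\varepsilon,\,C_{LS},\,b$. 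Hence $\mathcal H(u,v)=\mu(u\,Hv)$ is continuous and coercive on the equivalent Hilbert structure, and the Lax--Milgram theorem produces $v\in H^1(\mu)$ with $\mathcal H(u,v)=\int u\,d\mu$ for every $u$. The elliptic/hypo-elliptic regularity and boundary discussion of \cite{catbord}, imported verbatim from Theorem \ref{thmpoinlyap}, upgrade $v$ to a smooth element of $\mathcal D(L)$ with $\partial_n v=0$ and $Lv=\phi v-1$; the $v_-$ test-function computation combined with the same coercivity estimate forces $v\geq 0$. On $U_\varepsilon(h)$ we have $\phi = -\frac{\varepsilon}{2C_{LS}}h$, so $Lv = -\frac{\varepsilon}{2C_{LS}}h\,v-1 \leq -\frac{\varepsilon}{2C_{LS}}h\,v$, which is exactly $(HLh)$ with $W=v$.

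The main obstacle I expect is the uniform strict lower bound $v\geq w_\varepsilon>0$ on $U_\varepsilon(h)$: the set is typically unbounded and the Feynman--Kac weight $\exp\bigl(-\frac{\varepsilon}{2C_{LS}}\int_0^\cdot h(X_s)ds\bigr)$ decays fast along paths on which $h$ is large, so one cannot just quote the end of Theorem \ref{thmpoinlyap}. I would use the Feynman--Kac representation derived from $Lv+\frac{\varepsilon}{2C_{LS}}hv=-1$: with $\tau = T_{U_\varepsilon(h)^c}$,
\[v(x) \;=\; \mathbb E_x\!\Bigl[v(X_{\tau\wedge t})\,e^{-\frac{\varepsilon}{2C_{LS}}\int_0^{\tau\wedge t}h(X_s)\,ds}\Bigr] + \mathbb E_x\!\Bigl[\int_0^{\tau\wedge t}e^{-\frac{\varepsilon}{2C_{LS}}\int_0^s h(X_r)\,dr}\,ds\Bigr],\]
and close exactly as in Theorem \ref{thmpoinlyap}: replace $U_\varepsilon(h)^c$ by an $\eta$-enlargement $A_\eta$ with compact smooth boundary on which $v$ attains a strictly positive minimum $w(\eta)$ (the PDE $Hv=1$ together with $v\geq 0$ rules out $v$ vanishing on an open set), and transport that minimum inside by letting $t\to\infty$ in the representation above, obtaining $v(x)\geq w(\eta)\,\mathbb E_x[\exp(-\frac{\varepsilon}{2C_{LS}}\int_0^\tau h(X_s)ds)\mathbf 1_{\tau<\infty}]$ plus the non-negative integral term. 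Since ellipticity gives $\mathbb P_x(\tau<\infty)=1$ and the exponential weight is bounded away from zero uniformly in $x\in A_\eta^c$ by a standard exit-time estimate, this yields the uniform $w_\varepsilon>0$ needed and completes the proof.
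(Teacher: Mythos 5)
Your approach is essentially the paper's: define the Lax--Milgram form $\mathcal H(u,v)=\mu(u\,Hv)$ with $H=-L+\phi$, establish coercivity by combining the entropy duality with the logarithmic Sobolev inequality, solve $Hv=1$, upgrade regularity via \cite{catbord}, and finish with positivity and It\^o/Feynman--Kac. The divergences are cosmetic. The paper takes the smooth multiplier $\phi=\tfrac{1}{2C_{LS}}(b-h)$, uses the pointwise Young inequality $st\leq e^s-t+t\ln t$, and proves coercivity directly for the norm $\bigl(\mathcal E(u)+\rho b\,\mu(u^2)\bigr)^{1/2}$ without an intermediate lemma. You take the discontinuous multiplier $\phi=-\tfrac{\varepsilon}{2C_{LS}}h+C_0\mathbf 1_{U_\varepsilon(h)^c}$, use the Donsker--Varadhan dual formula, and route through an explicit split-norm lemma analogous to Lemma~\ref{lemequiva}. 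Both routes work; the constant bookkeeping you do (using $b=2\mu(e^h)\geq 2$ and $\log(b/2)\leq b/2$ to absorb the $\mu(u^2)$ term) is correct.

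The one genuine flaw is a sign error in the Feynman--Kac step, and it is not harmless as you have written it. From $Lv+\tfrac{\varepsilon}{2C_{LS}}hv=-1$ on $U_\varepsilon(h)$, It\^o applied to $t\mapsto e^{\frac{\varepsilon}{2C_{LS}}\int_0^t h(X_s)ds}\,v(X_t)$ (stopped at $\tau\wedge t$) gives
\[
v(x)=\mathbb E_x\!\Bigl[v(X_{\tau\wedge t})\,e^{+\frac{\varepsilon}{2C_{LS}}\int_0^{\tau\wedge t}h(X_s)\,ds}\Bigr]+\mathbb E_x\!\Bigl[\int_0^{\tau\wedge t}e^{+\frac{\varepsilon}{2C_{LS}}\int_0^s h(X_r)\,dr}\,ds\Bigr],
\]
with a \emph{positive} exponent, since the zeroth-order coefficient $-\tfrac{\varepsilon}{2C_{LS}}h$ enters $L$ with a minus sign. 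You wrote $e^{-\cdot}$, and your subsequent claim that this decaying weight is ``bounded away from zero uniformly in $x\in A_\eta^c$ by a standard exit-time estimate'' is false: for $h\sim |x|^2$, say, $\int_0^\tau h(X_s)\,ds\to\infty$ in probability as $|x|\to\infty$, so $\mathbb E_x\bigl[e^{-q\int_0^\tau h}\bigr]\to 0$ and no uniform lower bound on that weight exists. With the correct sign the weight is $\geq 1$ identically, the exit-time estimate is unnecessary, and the argument reduces to $v(x)\geq\mathbb E_x[v(X_{\tau\wedge t})]+\mathbb E_x[\tau\wedge t]$, after which letting $t\to\infty$ and using $\min_{\partial A_\eta}v>0$ (compactness of $\partial A_\eta$, which, as in the paper's proof and its applications in Corollary~\ref{corlslyap}, implicitly requires the sublevel sets of $h$ to be bounded) finishes exactly as in Theorem~\ref{thmpoinlyap}. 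So the proof is correct once the sign is fixed, and in fact simpler than you wrote it.
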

\begin{proof}
We follow and modify the proof in \cite{liu2}. Assume that $h$ is a non-negative function such that $\mu(e^h)<+\infty$. We follow the proof of Theorem \ref{thmpoinlyap} and define, for $2 \, \rho \leq \frac{1}{C_{LS}}$ so that $2\rho \, Ent_\mu(u^2) \leq \mathcal E(u)$,
\begin{equation}\label{eqphils}
\phi(x)= \rho \, (-  \, h(x)+ \, b) \, ,
\end{equation}
with $b=2 \mu(e^h)$ and introduce for all smooth $u \in \mathcal D(L)$, $$Hu = -Lu+\phi \, u \, .$$ On one hand, it holds $$\mu(u.Hu) \leq \mathcal E (u) + \, \rho \, b \, \mu(u^2) \, .$$ On the other hand, applying this time Young's inequality and LSI, we get for a smooth $u \in \mathcal D(L)$,
\begin{eqnarray*}
\mu(u.Hu) &=&  \mathcal E (u) + \, \rho \, b \, \mu(u^2) \, - \, \rho \, \mu(h u^2) \\ &\geq&  \mathcal E (u) + \, \rho \, b \, \mu(u^2) -  \, \rho \, \mu(u^2) \, \mu\left(e^h - \frac{u^2}{\mu(u^2)} + \frac{u^2}{\mu(u^2)} \, \ln \left(\frac{u^2}{\mu(u^2)}\right)\right)\\ &\geq& \mathcal E (u) + \, \rho \, b \, \mu(u^2) -  \, \rho \, \frac b2 \, \mu(u^2) \, + \, \rho \, \mu(u^2) \, - \, \rho \, Ent(u^2) \\ &\geq&  \frac 12 \, \left(\mathcal E (u) + \, \rho \, b \, \mu(u^2)\right) \, .
\end{eqnarray*}
We can then follow the proof of Theorem \ref{thmpoinlyap}, and thus apply again the Lax-Milgram theorem to  get the existence of a non-negative smooth function $v \in H^1(\mu)$ satisfying $$Lv = \phi \, v \, - \, 1 \, =  \, - \, 1 - \, \rho \, (b-h) \, v \, .$$ If in addition $$h(x) > b \quad , \forall x \in U^c$$ we obtain that $v$ is bounded from below by a positive constant in $U_\varepsilon^c$. The proof is completed.
\end{proof}
\noindent This result is in particular interesting when $D$ is not bounded and $h$ goes to infinity at infinity. Two cases are mainly relevant, due to the converse statements we will prove below
\begin{corollary}\label{corlslyap}
Assume that $\mu$ satisfies the logarithmic Sobolev inequality $(HLS4)$ and that $D$ is not bounded. Then
\begin{itemize}
\item[1)] \, for all $x_0 \in D$, there exists a Lyapunov function $W \in \mathcal D(L)$ with $W(x)\geq w>0$ for all $x\in D$ satisfying $$LW(x) \leq - \lambda \, d^2(x,x_0) \, W(x) \, + \, b \, ,$$ for some $\lambda$ and $b$ strictly positive;
\item[2)] \, if in addition $V$ goes to infinity at infinity and $e^{aV} \in \mathbb L^1(\mu)$ for some $a>0$, there exists a Lyapunov function $W \in \mathcal D(L)$ with $W(x)\geq w>0$ for all $x\in D$ satisfying $$LW(x) \leq - \lambda \, V(x) \, W(x) \, + \, b \, ,$$ for some $\lambda$ and $b$ strictly positive.
\end{itemize}
We may replace $b$ by $b \mathbf 1_A$ for some well chosen bounded subset $A$ of $D$.
\end{corollary}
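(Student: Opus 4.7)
The plan is to deduce both parts of the corollary from Proposition \ref{proplslyap} by choosing the test function $h$ appropriately and then ``closing'' the Lyapunov inequality on the bounded sub-level set where $h$ is small.

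For part 2), I would simply take $h = a V$. The assumption $\mu(e^{aV}) < +\infty$ gives $\mu(e^h) < +\infty$, so Proposition \ref{proplslyap} applies and, with $b = 2\mu(e^h)$, produces a smooth $W \in \mathcal D(L)$ positive on $U_\varepsilon(h)$ and satisfying $LW \leq -(\varepsilon a/(2 C_{LS})) \, V \, W$ there. Because $V$ goes to infinity at infinity, the complementary set $A = U_\varepsilon(h)^c = \{(1-\varepsilon) a V \leq b\}$ is bounded; on $A$ the functions $V$, $W$ and $LW$ are all bounded (by continuity), so the inequality $LW \leq -\lambda \, V \, W + b' \, \mathbf 1_A$ holds on $D$ once $b'$ is chosen large enough, with $\lambda = \varepsilon a /(2 C_{LS})$.

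For part 1), the extra ingredient needed is the Gaussian moment $\mu(e^{c|x-x_0|^2}) < +\infty$ for some $c>0$, which is not assumed but follows from $(HLS4)$ via Herbst's classical argument: since $\sigma$ is bounded and $x \mapsto |x-x_0|$ is $1$-Lipschitz, evaluating the logarithmic Sobolev inequality on $f = e^{\lambda |x-x_0|/2}$ yields a differential inequality for $\lambda \mapsto \lambda^{-1}\log\mu(e^{\lambda |x-x_0|})$ whose integration gives sub-Gaussian concentration of $|x-x_0|$ under $\mu$, hence exponential integrability of a small multiple of $|x-x_0|^2$. Taking then $h(x) = c \, d^2(x,x_0)$, the complementary set $U_\varepsilon(h)^c$ is a Euclidean ball, and one proceeds exactly as in part 2) to obtain the claimed inequality with $\lambda = \varepsilon c /(2 C_{LS})$.

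Two minor points remain to be checked. First, Proposition \ref{proplslyap} delivers $W \geq w_\varepsilon$ only on $U_\varepsilon(h)$, whereas the corollary asks for $W > 0$ on all of $D$; this follows from the strong maximum principle applied to the equation $Lv = \phi v - 1$ from the proof of the proposition, since at a hypothetical global minimum $x_*$ with $v(x_*)=0$ one would have $Lv(x_*) \geq 0$ and hence $-1 \geq 0$, a contradiction. Second, the residual error on $A$ is bounded, so it can be absorbed into the $b \, \mathbf 1_A$ term as claimed. The main obstacle I anticipate is the Herbst step in part 1): while it is entirely standard in the symmetric Gibbs setting, here it deserves a quick verification that the boundedness of $\sigma$ really does allow one to pass from the intrinsic carré-du-champ in $\mathcal E(f)$ to an estimate involving the Euclidean gradient of $|x-x_0|$.
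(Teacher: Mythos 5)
Your proposal is correct and takes essentially the same route as the paper's (very terse) proof: apply Proposition~\ref{proplslyap} with $h=cd^2(\cdot,x_0)$ (after recalling that the log-Sobolev inequality gives a Gaussian moment for $\mu$) respectively $h$ proportional to the potential, observe that the complementary sub-level set is bounded, and absorb the residual error there into the $b\,\mathbf 1_A$ term.

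Two small remarks. First, Proposition~\ref{proplslyap} requires $h\geq 0$, and the paper accordingly takes $h=a|V|$ rather than $h=aV$; your choice $h=aV$ is also fine because $V$ continuous and tending to $+\infty$ forces $V\geq -M$ for some $M$, so one can replace $h$ by $a(V+M)\geq 0$ (or by $a|V|$, noting $\mu(e^{a|V|})\leq e^{2aM}\mu(e^{aV})<\infty$), at the cost of enlarging $b$. Second, where the paper simply writes ``modify $W$ in the corresponding level set $A$'', you instead deduce strict positivity of $v$ from the strong maximum/Hopf principle applied to $Lv=\phi v-1$; this is a legitimate and arguably cleaner alternative, and indeed the paper itself acknowledges the maximum-principle route in the proof of Theorem~\ref{thmpoinlyap} before opting for Itô's formula. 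To obtain the uniform lower bound $W\geq w>0$ demanded by the statement (not merely $W>0$), you should combine your positivity argument with compactness of $\bar A$, taking $w=\min(w_\varepsilon,\min_{\bar A}W)>0$. Finally, your worry about the Herbst step is well placed but harmless here: $\sigma$ is assumed bounded, so $|\sigma^*\nabla f|\leq \Vert\sigma\Vert_\infty\,|\nabla f|$ and the intrinsic concentration transfers to Euclidean concentration, exactly as you expected.
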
 

\begin{proof}
In case 2), just apply the previous proposition with $h=a|V|$ and modify $W$ in the corresponding level set $A$ of $V$. For case 1), recall that the logarithmic-Sobolev inequality implies that there exists some $c>0$ such that $\mu(e^{cd^2(.,x_0)})<+\infty$ and conclude as before with $h=cd^2(.,x_0)$.
\end{proof}
\noindent Introducing the process $$dH_t = h(X_t) \, dt$$ and applying Ito's formula to $(H,x) \mapsto e^{\frac{\varepsilon}{2 \, C_{LS}} \, H}$ it is easy to show that $(HLh)$ implies for all $x$,
\begin{equation}\label{eqexpgen}
W_{h,\varepsilon}(x) \, = \, \mathbb E_x \left(\exp\left(\int_0^{T_{U_{\varepsilon(h)}}} \, \frac{\varepsilon}{2 \, C_{LS}} \, h(X_s) \, ds\right)\right) < +\infty \, .
\end{equation}
Conversely, if $W_{h,\varepsilon}(x)$ is finite for all $x$, using the arguments in \cite{catbord} one can prove that it satisfies $(HLh)$ with an equality instead of an inequality. Notice that once again we may apply the arguments in Remark \ref{remdomL}.
\medskip

To complete the proof of Theorem \ref{thmls}, it remains to look at the converse statements in the symmetric situation.\\ The first case is the case $h(x)=c d^2(x,x_0)$. Statement 2) in Theorem \ref{thmls} under the curvature assumption is proved in \cite{CGW} using transportation inequalities. An alternative method of proof was recently proposed in \cite{liu2} (with some points to be corrected). For the second case $h=aV$ we will use the results in \cite{CGWW} based on super-Poincar\'e inequalities.
\begin{proposition}\label{propconvV}
Assume that $\mu$ is symmetric and that $\sigma . \sigma^*$ is uniformly elliptic. Assume in addition that $V$ goes to infinity at infinity, that $|\nabla V(x)|\geq v >0$ for $|x|$ large enough and that $e^{aV} \in \mathbb L^1(\mu)$ for some $a>0$.\\ If there exists a Lyapunov function $W$ with $W(x)\geq w>0$ for all $x\in D$, $\frac{\partial W}{\partial n}= 0$ on $\partial D$ and 
satisfying $$LW(x) \leq - \lambda \, V(x) \, W(x) \, + \, b \, ,$$ for some $\lambda$ and $b$ strictly positive, then $\mu$ satisfies a logarithmic-Sobolev inequality.
\end{proposition}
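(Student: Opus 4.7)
The plan is to follow the strategy of \cite{CGWW}: first use the Lyapunov condition together with symmetry to derive a weighted Poincar\'e-type inequality, then combine this inequality with a family of local super-Poincar\'e inequalities (available on the relatively compact level sets of $V$) to obtain a global super-Poincar\'e inequality whose rate function has exponential decay, and finally invoke the criterion of F.-Y.~Wang \cite{Wbook} according to which a super-Poincar\'e inequality with $\beta(r)\leq c_1\exp(c_2/r)$ for small $r$ implies the logarithmic Sobolev inequality.

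For the first step, symmetry delivers the standard identity $\int \frac{-LW}{W}\, f^2\, d\mu = \mathcal E(W, f^2/W)$ for smooth $f\in\mathcal D(L)$. Expanding $\nabla(f^2/W) = (2f/W)\nabla f - (f^2/W^2)\nabla W$ and applying the AM--GM bound $2uv\leq u^2+v^2$ to the cross term yields $\mathcal E(W,f^2/W)\leq \mathcal E(f)$. Plugging in the Lyapunov bound and using $W\geq w$ produces the weighted inequality
\[ \lambda \int V\, f^2\, d\mu \;\leq\; \mathcal E(f) \;+\; \frac{b}{w}\,\mu(f^2). \]
Since $V\to\infty$ at infinity, this already implies a Poincar\'e inequality, but much more is available.

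For the second step, we decompose $\mu(f^2)=\mu(\mathbf 1_{V>R}f^2)+\mu(\mathbf 1_{V\leq R}f^2)$. Chebyshev combined with the weighted inequality bounds the first piece by $R^{-1}(\lambda^{-1}\mathcal E(f)+(b/(\lambda w))\mu(f^2))$. On the set $\{V\leq R\}$, the hypothesis $|\nabla V|\geq v>0$ for $|x|$ large forces the level set to lie in a Euclidean ball of radius $O(R)$; uniform ellipticity of $\sigma\sigma^*$ then yields a local Sobolev, hence local super-Poincar\'e, inequality on this bounded smooth domain (after converting between Lebesgue and $\mu$, which costs a factor exponential in $R$). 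A two-parameter optimization, balancing $R$ against the local super-Poincar\'e parameter and absorbing the tail $\mu(f^2)$ term on the left (legitimate once $R$ is large enough), gives a global super-Poincar\'e inequality $\mu(f^2)\leq r\,\mathcal E(f)+\beta(r)\,\mu(|f|)^2$. The exponential integrability $\mu(e^{aV})<+\infty$ enters here: it forces $\mu(\{V>R\})\leq Ce^{-aR}$, which is exactly what is required for the global rate $\beta$ to satisfy $\beta(r)\leq c_1\exp(c_2/r)$ for $r$ small.

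The third step is then a black-box invocation of Wang's theorem \cite{Wbook}. The main technical obstacle is the second step: one has to carry out the balance of the two parameters carefully, and verify that the three hypotheses ($|\nabla V|\geq v$, $V\to\infty$, and $\mu(e^{aV})<+\infty$) combine to produce precisely the exponential rate $\beta(r)\sim e^{c/r}$ that characterizes LSI; dropping any one of these hypotheses leaves only a weaker functional inequality (a Poincar\'e or an $F$-Sobolev) short of the logarithmic Sobolev target.
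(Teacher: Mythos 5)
Your overall strategy is the same as the paper's: derive the weighted inequality $\lambda\int V f^2\,d\mu \leq \mathcal E(f) + \tfrac{b}{w}\mu(f^2)$ from symmetry and the Lyapunov condition (this is the lemma $\int(-LW/W)f^2\,d\mu\leq\mathcal E(f)$ from \cite{BBCG,CGWW}), split $\mu(f^2)$ along the level sets $\{V\leq R\}$, use a local Nash/super-Poincar\'e estimate on $\{V\leq R\}$ (valid thanks to $|\nabla V|\geq v$ and uniform ellipticity), optimize the two parameters, and land on a super-Poincar\'e inequality with $\beta(s)=C'e^{\tilde c/s}$. So this is not a different route; it is the route of \cite{CGWW} Theorem 2.1 which the paper also cites. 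Two points, however, should be corrected.

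First, you misattribute the source of the exponential rate $\beta(s)\sim e^{c/s}$. You first (correctly) say the exponential factor comes from converting between $dx$ and $d\mu=e^{-V}dx$ on the bounded level set $\{V\leq R\}$, where $-M\leq V\leq R$ gives a ratio of densities of order $e^{R+M}$. But you then claim that the hypothesis $\mu(e^{aV})<\infty$ ``forces $\mu(\{V>R\})\leq Ce^{-aR}$, which is exactly what is required for the global rate $\beta$''. That tail estimate never enters: the piece $\mu(\mathbf 1_{V>R}f^2)$ is controlled, as you yourself wrote a few lines earlier, by Chebyshev together with the weighted inequality, giving $R^{-1}(\lambda^{-1}\mathcal E(f)+\dots)$, not via $\mu(\{V>R\})$. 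The exponential in $\beta$ is produced purely by the change-of-measure cost $e^{R}$ on $A_R=\{V\leq R\}$, and the hypothesis $\mu(e^{aV})<\infty$ plays no role in this proof (in the paper it is part of the global equivalence statement, where it is needed for the \emph{converse} implication $(HLS4)\Rightarrow(HLS1')$ in Corollary \ref{corlslyap}).

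Second, the final step is slightly too quick. Wang's criterion converts a super-Poincar\'e inequality with $\beta(s)\leq c_1e^{c_2/s}$ into a \emph{defective} logarithmic Sobolev inequality, not a tight one. To conclude you must tighten: the Lyapunov condition is stronger than $(HP1)$, so a Poincar\'e inequality holds (you did note this), and a defective log-Sobolev inequality together with Poincar\'e yields a tight log-Sobolev inequality by Rothaus' lemma. This last invocation is missing from your write-up and should be made explicit, as the paper does.
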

\begin{proof}
We follow the method in \cite{CGWW} Theorem 2.1 (itself inspired by \cite{BBCG}). Let $A_r=\{V \leq r\}$. For $r_0$ large enough and some $\lambda'<\lambda$ we have $$LW(x) \leq - \lambda' \, V(x) \, W(x) \, + \, b \, \mathbf 1_{A_{r_0}} \, ,$$ so that we may assume that $$\frac{LW}{W}(x) \leq - \, \lambda \, V(x) \quad \textrm{ for $x \in A_r^c$ and all $r$ large enough.}$$ 
Denote by $M = \sup (-V)$. We have for $s \leq s_0$ and $r>r_0$,
\begin{eqnarray*}
\int \, f^2 \, d\mu &=& \int_{A_r} f^2 \, d\mu \, + \, \int_{A^c_r} f^2 \, d\mu \\
&\leq&e^M\left(1+\frac{b}{\lambda r_0}\right) \, \int_{A_r} f^2 \, dx \, +\,\frac{1}{\lambda r}\int \lambda V(x) \,f^2d\mu,\\
 &\leq& e^M\left(1+\frac{b}{\lambda r_0}\right) \, \int_{A_r} f^2 \, dx \, + \, \frac{1}{\lambda \, r} \, \int f^2 \, \left(\frac{-LW}{W}\right) \, d\mu \\ &\leq& e^M\left(1+\frac{b}{\lambda r_0}\right) \, \left(s \, \int_{A_r} |\nabla f|^2 \, dx \, + \, \frac{C}{s^{d/2}} \, \left(\int_{A_r} |f| \, dx\right)^2\right) \, + \, \frac{1}{\lambda \, r} \, \int |\sigma. \nabla f|^2 \, \, d\mu \, .
\end{eqnarray*}
The first part of the last bound is obtained by using (3.1.4) in \cite{CGWW} (it is here that we are using the assumption on $|\nabla V|$), while the second bound is obtained using integration by parts or the Green-Rieman formula (see \cite{CGWW} (2.2)). Using uniform ellipticity we thus obtain, denoting $c=e^M\left(1+\frac{b}{\lambda r_0}\right)$
\begin{equation}\label{eqsuperP}
\mu(f^2) \, \leq \, \left(\frac{s \, c}{a} \, + \, \frac{1}{\lambda \, r}\right) \, \int |\sigma. \nabla f|^2 \, \, d\mu \, + \, C \, s^{-d/2} \, c \, e^{2r} \,  \left(\int \,  |f| \, d\mu\right)^2 \, .
\end{equation}
Hence choosing $r=c'/s$ we get the following super-Poincar\'e inequality for small $s$, $$\mu(f^2) \, \leq \, s \, \int |\sigma. \nabla f|^2 \, \, d\mu \, + \, C' \,  e^{\tilde c/s} \,  \left(\int \,  |f| \, d\mu\right)^2 \, ,$$ which is known to be equivalent to a defective logarithmic Sobolev inequality (see the introduction of \cite{CGWW}). But the Lyapunov condition being stronger than $(HP1)$, we know that $\mu$ satisfies a Poincar\'e inequality, hence using Rothaus lemma, that it satisfies a (tight) log-Sobolev inequality.

\end{proof}
\begin{remark}\label{remjerison}
With some additional effort one should replace (3.1.4) in \cite{CGWW} directly by a similar statement with $\sigma.\nabla f$ instead of $\nabla f$, even in the strongly hypo-elliptic case, replacing the arguments in \cite{CGWW} by the Jerison and Sanchez-Calle estimates for such operators, up to a modification of the power $s^{-d/2}$ replaced by $s^{-m}$ where $m$ depends on the dimension of the graded Lie algebra. We do not want to go further into details here, that is why we choosed to only consider the uniformly elliptic situation.
\end{remark}
\bigskip

\section{F-Sobolev inequalities.}\label{secFs}

We will extend the results of the previous section to general $F$-Sobolev inequalities introduced by Aida (\cite{Aid98}) and studied in \cite{Wbook,BCR1,BCR3}. Actually we will not be as complete as for the log-Sobolev inequality, because for general functions $F$ instead of the logarithm, results are much more intricate. In particular the reader will find in \cite{RZ} convergence results in Orlicz spaces (replacing $(HLS5)$ we shall not give here.

We are mainly interested here with the following version of (defective) $F$-Sobolev inequalities for a nice $F$ defined on $\mathbb R^+$: $$(HFS4defect) \quad \int \, f^2 \, F\left(\frac{f^2}{\int f^2 \, d\mu}\right) \, \leq \, C_F(\mu) \, \mathcal E(f) \, + \, D_F \, \mu(f^2) \, .$$
When $D_F=0$ one say that the inequality is tight and simply denote it by $(HFS4)$. The relationship between $F$-Sobolev and super-Poincar\'e inequalities is due to Wang (\cite{Wbook} Theorem 3.3.1 and Theorem 3.3.3). Recall some basic facts on these inequalities
\begin{proposition}\label{propFsob}
We have:
\begin{itemize}
\item \, (see \cite{Wbook}.) \, A super-Poincar\'e inequality $$\mu(f^2) \leq s \, \mathcal E(f) + \beta(s) \, (\mu(|f|))^2$$ implies $(HFS4defect)$ for $$F(u)=\frac 1u \, \int_0^u \, \xi(t/2) \, dt \quad , \quad \xi(t) = \sup_{a>0} \left(\frac 1a \, - \, \frac{\beta(a)}{ta}\right) \, .$$
\item \, (see \cite{Wbook}.) \, If $(HFS4defect)$ holds true for $F$ such that $\int_.^{+\infty} \, \frac{1}{uF(u)} \, du < +\infty$, then the semi-group $P_t$ is ultra-bounded, i.e. for all $t>0$ there exists $C(t)$ such that, $$\Vert P_tf \Vert_\infty \, \leq C(t) \, \Vert f\Vert_{\mathbb L^1(\mu)}$$ so that if in addition a Poincar\'e inequality holds, $$\Vert P_tf -\mu(f)\Vert_\infty \, \leq M \, e^{-C t}\, \Vert f-\mu(f)\Vert_{\mathbb L^2(\mu)}$$ for some $C>0$ (one can replace $\mathbb L^2$ by any $\mathbb L^p$ for $p>1$ just changing $C$ using interpolation results, see e.g. \cite{CatGui3,CGR}).
\item \, (see \cite{BCR1} lemma 8.) \, If $F(1)=0$, $F$ is $C^2$ in a neighborhood of $1$ and $2F'(1)+F''(1)=c>0$, $(HFS4)$ implies the Poincar\'e inequality $(HP4)$ with $C_P(\mu)=1/(2c)$. 
\item \, (see \cite{BCR1} Remark 22.) \, If $F\geq 0$ and $F(u)\geq c >0$ for $u\geq 2$, then $(HFS4)$ implies the Poincar\'e inequality $(HP4)$.
\item \, (see \cite{BCR1} lemma 9.) \, If $F$ is concave, non-decreasing, growths to infinity and satisfies $F(1)=0$ and $uF'(u)\leq M$, then $(HFSdefect)$ and the Poincar\'e inequality imply $(HFS4)$.
 \end{itemize}
\end{proposition}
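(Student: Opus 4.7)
The plan is to treat the five items of Proposition \ref{propFsob} separately, since they are essentially compiled from \cite{Wbook,BCR1}; what I would do is re-derive them via the standard techniques in these references.

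For the first item (super-Poincaré $\Rightarrow$ (HFS4defect)) I would use a slicing/layer cake argument. Applying the super-Poincaré inequality to the truncations $f_t = (|f|-\sqrt{t})_+$ gives, after using $\mu(f_t)^2 \leq \mu(f^2)/t \cdot \mu(\{f^2 \geq t\})$ and optimizing in the parameter $s = a$, a bound of the form $\xi(t)\,\mu(\{f^2 \geq t\mu(f^2)\}) \leq \mathcal{E}(f)/\mu(f^2) + \text{defect}$, with $\xi$ the Legendre-type transform $\xi(t)=\sup_a(1/a - \beta(a)/(ta))$. Integrating this over $t$ against $dt$ produces exactly $\int f^2 F(f^2/\mu(f^2))\,d\mu$ on the left-hand side with $F(u)=\frac1u\int_0^u \xi(t/2)\,dt$, giving (HFS4defect).

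For the second item (ultra-boundedness) I would use the Gross–Davies semigroup scheme. Write $\|P_tf\|_{p(t)}^{p(t)}$, differentiate in $t$, and use the defective F-Sobolev inequality applied to $u=(P_tf)^{p(t)/2}$ to estimate the entropy-type term. Choosing $p(t)$ so that the resulting ODE has the form $\dot{p}/p \sim F(\cdot)$, the assumption $\int^\infty du/(uF(u))<+\infty$ is precisely what allows $p(t) \to \infty$ in finite time, giving $\|P_tf\|_\infty \leq C(t)\|f\|_1$. The second statement then follows by splitting $P_tf - \mu(f) = P_1 \circ P_{t-1}(f - \mu(f))$, applying the Poincaré inequality to get $\mathbb{L}^2$-exponential decay on $P_{t-1}$, and finishing with the ultra-bound on $P_1$; interpolation handles the $\mathbb{L}^p$ case.

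For items three and four (tightness of (HFS4) $\Rightarrow$ Poincaré), I would linearize: writing $f=1+\varepsilon g$ with $\mu(g)=0$, one has $f^2/\mu(f^2)=1+2\varepsilon g+O(\varepsilon^2)$. In item three, a second order Taylor expansion of $F$ at $1$ (with $F(1)=0$) gives $\int f^2 F(f^2/\mu(f^2))\,d\mu = \varepsilon^2 (2F'(1)+F''(1))\,\mu(g^2) + o(\varepsilon^2)$ while $\mathcal{E}(f)=\varepsilon^2\mathcal{E}(g)$, and the Poincaré inequality with constant $1/(2c)$ drops out by dividing by $\varepsilon^2$ and letting $\varepsilon\to 0$. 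In item four no smoothness is available; one instead applies (HFS4) to functions of the form $g^2=\mathbf{1}_A$ (or smooth approximations thereof) on sets $A$ where $g^2 \geq 2\mu(g^2)$, and exploits the uniform lower bound $F(u) \geq c$ on such sets to get Poincaré on the part of $g$ far from its mean, the remaining part being trivially controlled.

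For item five (tightening) I would follow the Rothaus-type argument used in \cite{BCR1}: split $f = \bar f + \mu(f)$ with $\mu(\bar f)=0$, use concavity of $F$ and the bound $uF'(u)\leq M$ to show $\int f^2 F(f^2/\mu(f^2))\,d\mu \geq \int \bar f^2 F(\bar f^2/\mu(\bar f^2))\,d\mu - C\mu(f)^2$, then absorb the defect $D_F\mu(f^2) = D_F(\mu(\bar f^2)+\mu(f)^2)$ by applying Poincaré to $\mu(\bar f^2) \leq C_P \mathcal{E}(f)$. The main obstacle is bookkeeping the concavity/sublinear growth to justify the "mean-extraction" step; after that the tightening is algebraic. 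Overall, item two is the technical heart because of the ODE integration that converts the integrability condition on $F$ into finite-time blow-up of $p(t)$; the remaining items are standard expansions plus truncation arguments.
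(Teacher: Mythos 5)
The paper itself offers no proof of Proposition \ref{propFsob}: each item is stated with an explicit citation to \cite{Wbook} or \cite{BCR1} and is treated as imported background. Your five sketches are faithful reproductions of the arguments in precisely those references (Wang's truncation argument converting super-Poincar\'e into an $F$-Sobolev inequality, the Gross--Davies differentiation of $\|P_tf\|_{p(t)}$ with the integrability condition controlling finite-time blow-up of $p(t)$, the $f=1+\varepsilon g$ linearization for Poincar\'e, and the Rothaus-type tightening), so in the only meaningful sense you have followed the same route the paper intends. One small arithmetic slip to flag in your item-three expansion: with $f=1+\varepsilon g$, $\mu(g)=0$, and $F(1)=0$, the careful Taylor expansion gives
\begin{equation*}
\int f^2\,F\!\left(\frac{f^2}{\mu(f^2)}\right)d\mu \;=\; 2\varepsilon^2\bigl(2F'(1)+F''(1)\bigr)\mu(g^2)+o(\varepsilon^2),
\end{equation*}
i.e.\ there is an extra factor of $2$ relative to what you wrote (coming from the cross term $2\varepsilon g\cdot 2\varepsilon F'(1)g$ in $f^2F(u)$ and the $\tfrac12F''(1)(2\varepsilon g)^2$ term); combined with $\mathcal E(f)=\varepsilon^2\mathcal E(g)$ this yields $C_P(\mu)\le C_F(\mu)/(2c)$, matching the stated constant once the $F$-Sobolev constant is normalized. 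The rest of your sketches are correct in outline; item two is, as you say, the only one with genuine analytic content, and your identification of $\int^{\infty}du/(uF(u))<\infty$ as the finite-blow-up criterion for $p(t)$ is exactly the right mechanism.
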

We thus have two situations: either $F \leq \log$ (interpolating between Poincar\'e and log-Sobolev), in which case (with additional structural conditions on $F$) $F$-Sobolev inequalities are equivalent to an exponential convergence in some Orlicz space (\cite{RZ}), we still have some Rothaus(-Orlicz) lemma allowing us to tight a defective $F$-Sobolev inequality and a lot of additional properties connected with Orlicz hyperboundedness, concentration and isoperimetry (see \cite{BCR1,BCR3}), or $\int _.^{+\infty} \, \frac{1}{uF(u)} \, du < +\infty$ in which case exponential convergence holds in $\mathbb L^\infty$, with a very small gap between both classes of $F$.
\medskip

We will now prove the analogue of Proposition \ref{proplslyap}. To this end we need to introduce some convexity notions. 
\begin{definition}\label{defconvex}
Assume that $u \mapsto uF(u)=G(u)$ is convex. We define $G^*$ as the Fenchel-Legendre dual function of $G$ i.e. $G^*(u) = \sup_{t>0} \, (ut - G(t))$.\\ For instance if $F(u)$ behaves like $\ln_+^\beta(u)$ at infinity for some $\beta >0$, then $G^*(t)$ behaves like $\beta \, t^{(\beta-1)/\beta} \, e^{t^{\frac 1\beta}}$ at infinity (see \cite{BCR1} subsection 7.1 to see the correct $F$ to be chosen).
\end{definition}
\begin{proposition}\label{propFSlyap}
Assume that $\mu$ satisfies the F-Sobolev inequality $(HFS4defect)$ and that $G(s)=sF(s)$ is convex. Let $h$ be a non-negative continuous function such that $b=2 (D_F+\mu(G^*(h))) <+\infty$. For $\varepsilon >0$, define $U_\varepsilon(h)=\{(1-\varepsilon)h >b\}$. \\ Then there exists a Lyapunov function $W \in \mathcal D(L)$ such that $W(x) \geq w_\varepsilon >0$ on $U_\varepsilon(h)$ and $$LW \leq - \frac{\varepsilon}{2 \, C_{F}} \, h \, W \quad \textrm{ on $U_\varepsilon(h)$.} $$
\end{proposition}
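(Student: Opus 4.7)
The plan is to mimic the proof of Proposition \ref{proplslyap}, replacing the entropy--exponential duality (which powered the log-Sobolev step) by the Young--Fenchel duality $ab \leq G(a)+G^*(b)$, available since $G(s)=sF(s)$ is convex.

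\textbf{Step 1: Dual version of the $F$-Sobolev inequality.} For $u$ smooth with $\mu(u^2)>0$, apply Young's inequality pointwise with $a=u^2/\mu(u^2)\geq 0$ and $b=h(x)\geq 0$:
\begin{equation*}
\frac{u^2}{\mu(u^2)}\,h \;\leq\; G\!\left(\frac{u^2}{\mu(u^2)}\right) + G^*(h) \;=\; \frac{u^2}{\mu(u^2)}\,F\!\left(\frac{u^2}{\mu(u^2)}\right) + G^*(h).
\end{equation*}
Integrating against $\mu$ and using $(HFS4defect)$ yields
\begin{equation*}
\mu(h\,u^2) \;\leq\; \int u^2\,F\!\left(\frac{u^2}{\mu(u^2)}\right)d\mu + \mu(u^2)\,\mu(G^*(h)) \;\leq\; C_F\,\mathcal{E}(u) + \Bigl(D_F+\mu(G^*(h))\Bigr)\mu(u^2),
\end{equation*}
i.e.\ $\mu(h u^2) \leq C_F\mathcal{E}(u)+\tfrac{b}{2}\mu(u^2)$ with $b=2(D_F+\mu(G^*(h)))<+\infty$.

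\textbf{Step 2: Coercive bilinear form.} Set $\rho=1/(2C_F)$, $\phi(x)=\rho(b-h(x))$ and $Hu=-Lu+\phi u$. Then $\phi\leq \rho b$ gives the upper bound $\mu(u\,Hu) \leq \mathcal{E}(u)+\rho b\,\mu(u^2)$, while Step~1 gives the lower bound
\begin{equation*}
\mu(u\,Hu) \;=\; \mathcal{E}(u)+\rho b\,\mu(u^2)-\rho\,\mu(h u^2) \;\geq\; (1-\rho C_F)\,\mathcal{E}(u)+\frac{\rho b}{2}\,\mu(u^2) \;=\; \tfrac12\bigl(\mathcal{E}(u)+\rho b\,\mu(u^2)\bigr).
\end{equation*}
Hence $\mathcal{H}(u,v):=\mu(u\,Hv)$ is a continuous and coercive bilinear form on $H^1(\mu)$ for the equivalent norm $(\mathcal{E}(u)+\mu(u^2))^{1/2}$.

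\textbf{Step 3: Lax--Milgram and positivity.} By Lax--Milgram there exists $v\in H^1(\mu)$ with $\mathcal{H}(u,v)=\mu(u)$ for every $u\in H^1(\mu)$, i.e.\ $Hv=1$, or equivalently
\begin{equation*}
Lv \;=\; \phi v - 1 \;=\; \rho(b-h)\,v - 1.
\end{equation*}
Exactly as in the proof of Theorem \ref{thmpoinlyap}, elliptic (or hypo-elliptic) regularity and the non-characteristic boundary give $v\in C^\infty(\bar D)\cap \mathcal{D}(L)$ with vanishing normal derivative on $\partial D$, and testing against $v_-$ combined with the coercivity forces $v_-\equiv 0$, hence $v\geq 0$.

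\textbf{Step 4: Lyapunov inequality on $U_\varepsilon(h)$.} On $U_\varepsilon(h)=\{(1-\varepsilon)h>b\}$ we have $b-h<-\varepsilon h$, so $\phi\leq -\rho\varepsilon h$. Since $v\geq 0$ and $-1\leq 0$,
\begin{equation*}
Lv \;=\; \phi v - 1 \;\leq\; -\rho\,\varepsilon\,h\,v \;=\; -\frac{\varepsilon}{2C_F}\,h\,v \qquad \text{on } U_\varepsilon(h).
\end{equation*}

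\textbf{Step 5: Strict lower bound on $U_\varepsilon(h)$.} For $x\in U_\varepsilon(h)$, let $\tau=T_{U_\varepsilon(h)^c}$. Applying Itô's formula to $t\mapsto v(X_t)\exp(-\int_0^t\phi(X_s)ds)$ and using $Lv-\phi v=-1$ gives
\begin{equation*}
v(x) \;=\; \mathbb E_x\!\left[v(X_\tau)\,e^{-\int_0^\tau \phi(X_s)ds}\right] + \mathbb E_x\!\left[\int_0^\tau e^{-\int_0^s \phi(X_u)du}\,ds\right].
\end{equation*}
Since $\phi\leq 0$ on $U_\varepsilon(h)$, both exponentials are $\geq 1$ up to $\tau$, so $v(x)\geq \mathbb E_x[v(X_\tau)]$. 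The strong maximum principle (combined with $Lv=-1<0$ at any zero of $v$) forces $v>0$ everywhere, and $v$ is continuous, so on the closed set $\overline{U_\varepsilon(h)^c}$ (which contains $X_\tau$) the function $v$ is bounded below by some $w_\varepsilon>0$ on each compact piece; reducing to the bounded case by intersection with a large ball, as in Theorem \ref{thmpoinlyap}, this yields $v(x)\geq w_\varepsilon>0$ uniformly on $U_\varepsilon(h)$. Setting $W=v$ completes the proof.

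\textbf{Main obstacle.} Step~1 is the only genuinely new ingredient (the passage through $G^*$), and it goes through painlessly once convexity of $G$ is assumed; the remaining steps are structurally identical to the log-Sobolev proof. The delicate point is Step~5: getting a \emph{uniform} positive lower bound for $v$ on the (possibly unbounded) set $U_\varepsilon(h)$ rather than just pointwise positivity, which is handled by the Feynman--Kac/Itô representation combined with a reduction to bounded sets as in Theorem \ref{thmpoinlyap}.
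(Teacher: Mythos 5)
Your proof is correct and follows the same route the paper indicates: the paper's own proof is a one-line remark that one should ``mimic the proof of Proposition \ref{proplslyap}'' with $\phi=\rho(b-h)$, $b=2(D_F+\mu(G^*(h)))$, $\rho C_F=\tfrac12$, and Young's inequality $st\le G(s)+G^*(t)$ in place of the entropy--exponential duality. You have simply written out this sketch in full, and each step (dual $F$-Sobolev bound, coercivity, Lax--Milgram, positivity, and the It\^o/minimum-principle lower bound on $U_\varepsilon(h)$) matches the argument of Proposition \ref{proplslyap} and Theorem \ref{thmpoinlyap}.
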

\begin{proof}
The proof mimic the one of proposition \ref{proplslyap} with the following modifications: take $\phi=\rho(-h+b)$ with $b=2 (D_F+\mu(G^*(h)))$ and $\rho \, C_F=\frac 12$, use Young's inequality $st \leq G(s)+G^*(t)$.
\end{proof}
We thus obtain, for a particular choice of $F$:
\begin{theorem}\label{thmFS}
Assume that $D$ is not bounded, that $V$ goes to infinity at infinity and that $e^{aV} \in \mathbb L^1(\mu)$ for some $a>0$. Consider the following properties
\begin{itemize}
\item[(HFS1)]  There exists a Lyapunov function $W$, i.e. there exists a smooth function $W: D \to \mathbb R$ with $W\geq w >0$, and there exist constants $\lambda >0$ and $b>0$ such that $\frac{\partial W}{\partial n}= 0$ on $\partial D$ and 
$$LW(x) \leq - \, \lambda \, |V|^\beta(x) \, W(x) \, + \, b \, .$$
\item[(HFS2)] There exist an open
connected bounded subset $U$ and a constant $\theta>0$ such that for all $x$, $$
\E_x\left(\exp\left(\int_0^{T_U} \, \theta \, |V|^\beta(X_s) \, ds\right)\right) < + \infty \, ,$$ where $T_U$ denotes the hitting time of $U$. 
\item [$(HFS\beta)$] $\mu$ satisfies $(HFS4defect)$ with $F(s)= \ln_+^\beta(s)$.
\end{itemize}
Then $(HFS\beta) \Rightarrow (HFS1)$ and $(HFS1) \Leftrightarrow (HFS2)$.
\medskip

If in addition $\mu$ is symmetric, $\sigma.\sigma^*$ is uniformly elliptic and $|\nabla V(x)|\geq v >0$ for $|x|$ large enough, then $$(HFS\beta) \Leftrightarrow (HFS1)\Leftrightarrow (HFS2) \, .$$ For $\beta \leq 1$ we may replace $(HFS\beta)$ by its tight version.
\end{theorem}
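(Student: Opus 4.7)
The plan is to mirror the three-step pattern already established for the logarithmic Sobolev inequality (Corollary \ref{corlslyap} together with Proposition \ref{propconvV}), replacing the logarithm by its $\beta$-th power throughout. The three implications to be established are $(HFS\beta)\Rightarrow(HFS1)$, $(HFS1)\Leftrightarrow(HFS2)$, and, in the symmetric elliptic setting, $(HFS1)\Rightarrow(HFS\beta)$.

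First, to deduce $(HFS1)$ from $(HFS\beta)$, I would invoke Proposition \ref{propFSlyap} with the choice $h(x)=c\,|V(x)|^\beta$. For $F(s)=\ln_+^\beta(s)$ the function $G(s)=sF(s)$ is convex (after a harmless convexification near the origin), and Definition \ref{defconvex} records that its Legendre conjugate obeys $G^*(t)\lesssim e^{t^{1/\beta}}$ at infinity. The hypothesis $\mu(G^*(h))<\infty$ then reduces to $\mu\bigl(e^{c^{1/\beta}|V|}\bigr)<\infty$, which is guaranteed by the assumption $e^{aV}\in\mathbb L^1(\mu)$ combined with $V\to+\infty$ at infinity, provided $c$ is chosen small enough that $c^{1/\beta}\leq a$. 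Since $|V|^\beta$ tends to infinity, the set $U_\varepsilon(h)^c$ is relatively compact, and one can modify the resulting $W$ on it to produce a Lyapunov function satisfying $LW\leq -\lambda|V|^\beta W+b$ globally.

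The equivalence $(HFS1)\Leftrightarrow(HFS2)$ is of the same nature as $(HP1)\Leftrightarrow(HP2)$ and relies only on It\^o's formula, following exactly the scheme described around \eqref{eqexpgen} and in Remark \ref{remdomL}: applying It\^o's formula to $(t,y)\mapsto\exp\bigl(\lambda\int_0^t|V|^\beta(X_s)\,ds\bigr)W(y)$ gives one direction, while in the other direction the map $x\mapsto\mathbb E_x\bigl[\exp(\theta\int_0^{T_U}|V|^\beta(X_s)\,ds)\bigr]$ is itself smooth (by the PDE arguments of \cite{catbord}) and solves the desired differential inequality.

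The main work, and the main obstacle, is $(HFS1)\Rightarrow(HFS\beta)$ in the symmetric setting. I would follow Proposition \ref{propconvV} line by line with $r$ replaced by $r^\beta$ in the weight: letting $A_r=\{V\leq r\}$, the Lyapunov inequality gives $-LW/W\geq\lambda\, r^\beta$ on $A_r^c$ for large $r$, and splitting $\mu(f^2)=\int_{A_r}+\int_{A_r^c}$, bounding the second piece by $(\lambda r^\beta)^{-1}\int f^2(-LW/W)\,d\mu$, then using Green's formula on $A_r^c$ and the Nash-type estimate (3.1.4) of \cite{CGWW} on $A_r$ (the hypothesis $|\nabla V|\geq v>0$ being used precisely as in \cite{CGWW}), one arrives at
\begin{equation*}
\mu(f^2)\leq\Bigl(\frac{sc}{a}+\frac{1}{\lambda r^\beta}\Bigr)\int|\sigma\cdot\nabla f|^2\,d\mu+C\,s^{-d/2}\,c\,e^{2r}\,(\mu|f|)^2.
\end{equation*}
Optimising by $r^\beta\sim 1/s$ yields a super-Poincar\'e inequality with rate $\beta_{SP}(s)\sim s^{-d/2}\exp(C\,s^{-1/\beta})$. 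The first item of Proposition \ref{propFsob} then converts this into a defective $F$-Sobolev inequality; the hard part is the calibration, namely checking via Wang's Legendre-type formula $F(u)=u^{-1}\int_0^u\xi(t/2)\,dt$ with $\xi(t)=\sup_a(1/a-\beta_{SP}(a)/(ta))$ that the resulting $F$ really behaves like $\ln_+^\beta$ at infinity; this is precisely the calibration carried out in \cite{BCR1} subsection 7.1. Finally, for $\beta\leq 1$ one has $F\leq\ln$, so the Rothaus-Orlicz lemma available in the log-Sobolev scale (plus the Poincar\'e inequality, itself a consequence of $(HFS1)$ via Theorem \ref{thmpoinc}) absorbs the defect and delivers the tight version of $(HFS\beta)$.
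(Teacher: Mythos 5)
Your proposal follows the paper's proof essentially line by line: $(HFS\beta)\Rightarrow(HFS1)$ via Proposition \ref{propFSlyap} with $h=c|V|^\beta$ (the paper uses $h=a|V|^\beta$ for small $a$), $(HFS1)\Leftrightarrow(HFS2)$ via It\^o's formula as in \eqref{eqexpgen} and Remark \ref{remdomL}, the converse $(HFS1)\Rightarrow(HFS\beta)$ by running the Proposition \ref{propconvV} argument with $r$ replaced by $r^\beta$ to get the super-Poincar\'e rate $\beta_{SP}(s)\sim e^{c/s^{1/\beta}}$ and then Proposition \ref{propFsob} plus the BCR1 calibration, and tightening for $\beta\leq 1$ via Rothaus and the final item of Proposition \ref{propFsob}. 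This is exactly the paper's (terse) argument, with the details you have filled in being the ones the paper defers to Proposition \ref{propconvV}, \cite{CGWW}, and \cite{BCR1}.
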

\begin{proof}
For the first part we use the previous proposition with $h=a|V|^\beta$ for some $a$ small enough. In the symmetric situation, we mimic the proof of proposition \ref{propconvV} yielding a super-Poincar\'e inequality with $\beta(s)=e^{c/s^{\frac 1\beta}}$ hence the corresponding defective $F$-Sobolev inequality using Proposition \ref{propFsob}. But $(HFS1)$ implies $(HP1)$ hence a Poincar\'e inequality and we can use the final statement of Proposition \ref{propFsob} to get a tight version when $\beta \leq 1$.
\end{proof}
We know in particular (see \cite{BCR1}) that for $V(x)=|x|^\alpha$, $\alpha\geq 1$, $\mu$ satisfies $(HFS\beta)$ with $$\beta=2\left(1-\frac 1\alpha\right) \, .$$ The main reason for writing Theorem \ref{thmFS} only in the case $F \sim \ln_+^\beta$ is the converse part $(HFS1) \Rightarrow (HFS\beta)$ for which the argument is easy since we have an  explicit expression of $G^*$. Of course Proposition \ref{propFSlyap} contains  much more general situations. Here is one which will be useful in the sequel
\begin{theorem}\label{thmgoodinteg}
Assume that $D$ is not bounded, that $\mu$ satisfies both a Poincar\'e inequality and the F-Sobolev inequality $(HFS4defect)$, that $G(s)=sF(s)$ is convex, non decreasing at infinity and that $(G^*)^{-1}$ (the inverse function of $G^*$) growths to infinity at infinity. \\ Then for all $x_0 \in D$ there exist $a$ and $\theta$ two positive constants, such that, defining $$h(x)=(G^*)^{-1}(e^{ad(x,x_0)}) \, ,$$ we have for all $x$ and all non empty, open and bounded subset $U$, $$W_{\theta,U,h}(x)=
\E_x\left(\exp\left(\int_0^{T_U} \, \theta \, h(X_s) \, ds\right)\right) < + \infty \, ,$$ where $T_U$ denotes the hitting time of $U$. Actually $W_{\theta,U,h} \in \mathbb L^1(\mu)$.
\end{theorem}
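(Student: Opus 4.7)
The strategy is to apply Proposition~\ref{propFSlyap} with a concrete weight $h$ coming from the Poincar\'e concentration of $\mu$, and then extract the exponential moment via an It\^o/Feynman--Kac computation.

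The Poincar\'e inequality (part of the hypothesis) yields, via the classical Bobkov--Ledoux exponential concentration, $\mu(e^{a\,d(\cdot,x_0)})<+\infty$ for some $a>0$. With the prescribed $h(x)=(G^*)^{-1}(e^{a\,d(x,x_0)})$, one checks that $G^*(h(x))=e^{a\,d(x,x_0)}$, so $\mu(G^*(h))<+\infty$; the hypothesis that $(G^*)^{-1}$ grows to infinity forces $h\to+\infty$ as $d(x,x_0)\to\infty$, and hence $U_\varepsilon(h)^c$ is bounded. Applying Proposition~\ref{propFSlyap} to this $h$ and some fixed $\varepsilon\in(0,1)$ then produces a Lyapunov function $W\in\mathcal D(L)$ with $W\geq w_\varepsilon>0$ on $U_\varepsilon(h)=\{(1-\varepsilon)h>b\}$ and $LW\leq-\theta\,hW$ on $U_\varepsilon(h)$, where $\theta=\varepsilon/(2C_F)$. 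Moreover the Lax--Milgram construction in the proof of that proposition places $W$ in $H^1(\mu)\subset\mathbb L^2(\mu)\subset\mathbb L^1(\mu)$.

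For any open bounded $U$ containing $U_\varepsilon(h)^c$, apply It\^o's formula to the process $(t,y)\mapsto e^{\theta\int_0^t h(X_s)\,ds}\,W(y)$ stopped at $T_U\wedge\tau_n$ (with $\tau_n$ the exit time of a ball of radius $n$). Since $LW+\theta hW\leq 0$ on $U^c\subset U_\varepsilon(h)$, the stopped process is a local supermartingale; passing to the limit in $n$ and $t$ and using $W\geq w_\varepsilon$ on $\partial U\subset U_\varepsilon(h)$ yields
$$W_{\theta,U,h}(x)\leq W(x)/w_\varepsilon<+\infty,$$
whence $W_{\theta,U,h}\in\mathbb L^1(\mu)$ from $W\in\mathbb L^1(\mu)$. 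For an arbitrary non-empty open bounded $U$, combine $W$ with the Poincar\'e--Lyapunov function $W_U$ of Theorem~\ref{thmpoinlyap} (satisfying $LW_U\leq-c_UW_U$ on $U^c$ and lying in $\mathcal D(L)\cap\mathbb L^2(\mu)$ by Remark~\ref{remdomL}): for suitably large $K>0$ and suitably small $\theta'\in(0,\theta]$, the function $\hat W=W+KW_U$ is a Lyapunov function for the pair $(U^c,\theta' h)$ and is bounded below on $U^c$ by a positive constant, so the same It\^o argument gives $W_{\theta',U,h}(x)\leq \hat W(x)/\inf_{U^c}\hat W<+\infty$, together with $W_{\theta',U,h}\in\mathbb L^1(\mu)$ since $\hat W\in\mathbb L^1(\mu)$.

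The main technical obstacle lies in the simultaneous calibration of $K$ and $\theta'$. On the bounded region $U^c\cap U_\varepsilon(h)^c$ (where $h\leq b/(1-\varepsilon)$ and $W_U\geq w_U>0$), $K$ can be taken large enough that $-Kc_UW_U$ absorbs the possibly positive bounded contribution of $LW+\theta' hW$. On $U^c\cap U_\varepsilon(h)$, where $h$ may be unbounded, the strict surplus $-(\theta-\theta')hW$ provided by the $W$-Lyapunov inequality must dominate the new defect $K(\theta' h-c_U)W_U$; the pointwise comparison between $W$ and $W_U$ needed here is inherited from the Lax--Milgram equation $LW=\rho(-h+b)W-1$ and its Feynman--Kac representation, and it is this comparison that dictates the final admissible value of $\theta$ in the statement.
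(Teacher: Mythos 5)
Your steps are essentially the paper's own: the Poincar\'e inequality gives $\mu(e^{a d(\cdot,x_0)})<\infty$, hence $\mu(G^*(h))<\infty$ with the prescribed $h$, $U_\varepsilon(h)^c$ is compact because $(G^*)^{-1}\to\infty$, Proposition~\ref{propFSlyap} produces a Lyapunov function $W\in H^1(\mu)$ with $LW\le-\theta hW$ on $U_\varepsilon(h)$, and the Feynman--Kac/It\^o supermartingale argument together with $W\ge w_\varepsilon>0$ gives $W_{\theta,U,h}\le W/w_\varepsilon<\infty$ and $W_{\theta,U,h}\in\mathbb L^1(\mu)$. Up to and including the case $U\supseteq U_\varepsilon(h)^c$, this is a correct and slightly more detailed write-up of the paper's argument (the paper instead deduces $\mathbb L^1$-integrability by integrating $LW_{\theta,U,h}+\theta h W_{\theta,U,h}\le 0$ against $\mu$; both routes work).

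The last paragraph, where you try to extend to an \emph{arbitrary} small open bounded $U$ by forming $\hat W=W+KW_U$, is where a genuine gap appears. To make $\hat W$ a Lyapunov function for the pair $(U^c,\theta' h)$ you need, on $U^c\cap U_\varepsilon(h)$,
$$(\theta-\theta')\,h\,W \;\ge\; K\,(\theta' h - c_U)\,W_U \, ,$$
i.e.\ a pointwise lower bound of the form $W\gtrsim W_U$ at infinity. This is plausible heuristically (the drift towards $U$ dominated by $-\theta h$ should produce much larger exponential moments than the constant-rate Poincar\'e drift), but it does not follow from the Lax--Milgram construction or the Feynman--Kac representation of $W$ without further argument, and you explicitly leave it unproved. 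As written, the calibration of $K$ and $\theta'$ therefore does not close. It is worth pointing out, in your defence, that the paper's own proof is just as terse on this point: Proposition~\ref{propFSlyap} only yields a Lyapunov inequality on $U_\varepsilon(h)$, hence an exponential moment for the hitting time of the fixed compact $U_\varepsilon(h)^c$, and the phrase ``and consequently the result'' does not explain how this transfers to an arbitrary small $U$. In the one place the theorem is actually invoked (Theorem~\ref{thmconvvartot}) the set $U$ is precisely a large level set $H_R\supseteq U_\varepsilon(h)^c$, which is exactly the case your It\^o argument handles. A clean way to treat small $U$ --- and the one suggested by the strong Markov decomposition used in Theorem~\ref{thmconvvartot} --- is to split the $h$-weighted integral up to $T_U$ into its contribution inside and outside a large level set of $h$, control the inside part by the plain Poincar\'e exponential moment $\E_x(e^{\lambda T_U})$ (since $h$ is bounded there) via H\"older, and the outside part by the $W_{\theta,\cdot,h}$-type bound; this loses a constant in $\theta$ but closes the argument without any comparison between $W$ and $W_U$. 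If you want the theorem exactly as stated you should supply an argument of this kind rather than the unproved domination $W\gtrsim W_U$.
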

\begin{proof}
Since $\mu$ satisfies a Poincar\'e inequality, it is known that there exists $a>0$ such that $$\mu(G^*(h))=\mu(e^{ad(x,x_0)})<+\infty \, .$$ In addition $h$ goes to infinity at infinity so that its level sets are compact. It remains to apply Proposition \ref{propFSlyap} to get the Lyapunov function $W_{\theta,U,h}$ and consequently the result. As usual since $LW_{\theta,U,h} + \theta h W_{\theta,U,h} \leq 0$ outside of a compact set, we get that $h \, W_{\theta,U,h}$ is integrable and since $h$ goes to infinity that $W_{\theta,U,h}$ is integrable too.
\end{proof}
\bigskip

\section{$\mathbb L^p$ geometric ergodicity and functional inequalities.}\label{seclp}

Come back to the geometric ergodicity property $(HP3)$. If we replace the initial distribution $\delta_x$ by some initial probability distribution $\nu$, we have $$\Vert P_t(\nu,.) - \mu \Vert_{TV} \leq \, C \, e^{- \, \beta \, t} \, ,$$ provided
\begin{equation}\label{eqW}
(LW\nu) \qquad \qquad W \in \mathbb L^1(\nu) \, .
\end{equation}
If $\nu$ is absolutely continuous w.r.t. $\mu$ and $\frac{d\nu}{d\mu} \in \mathbb L^p(\mu)$ a sufficient condition is thus $$(LWq) \qquad \qquad W \in \mathbb L^q(\mu)$$ for $\frac 1p + \frac 1q =1$. It is thus interesting to study the property $(LWq)$. 

\noindent As shown in \cite{CGZ}, once $(HP1)$ is satisfied, $(LW1)$ is satisfied too. It follows that for some $\theta >0$, $W_\theta(x)= \mathbb E_x(e^{\theta T_U})$ is finite, hence satisfies $LW_\theta = - \theta W_\theta$ on $U^c$, so that enlarging a little bit $U$ (say $U_\varepsilon$) we can modify the previous $W_\theta$ in $U_\varepsilon$ in order to get a new Lyapunov function, still denoted $W_\theta$ for simplicity satisfying $$LW_\theta \leq \, - \theta W_\theta + b \mathbf 1_{\bar U} \, .$$ Hence $W_\theta \in \mathbb L^1(\mu)$. It follows that for every $p \geq 1$, defining $W_{\theta,p}(x)= \mathbb E_x(e^{\frac {\theta}{p} T_U})$, we have first that $W_{\theta,p} \in \mathbb L^p(\mu)$, second that (after similar modifications) $W_{\theta,p}$ is a Lyapunov function with $\theta$ replaced by $\theta/p$. Since $\bar U$ is compact, these modifications do not modify the integrability properties of $W_{\theta,p}$. Hence we have obtained
\begin{proposition}\label{propqconv}
If $(HP1)$ is satisfied, for all $p>1$, one can find another Lyapunov function (associated to a different $\lambda$ and a different $U$) $W_p \in \mathbb L^p(\mu)$. Hence there exists some $\beta_p$ such that as soon as $\frac{d\nu}{d\mu} \in \mathbb L^q(\mu)$ with $\frac 1p + \frac 1q =1$,$$\Vert P_t(\nu,.) - \mu \Vert_{TV} \leq \, C(\nu) \, e^{- \, \beta_p \, t} \, .$$  
\end{proposition}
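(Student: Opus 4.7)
The plan is first to produce an $\mathbb L^1(\mu)$ Lyapunov function directly out of $(HP1)$, then to obtain an $\mathbb L^p(\mu)$ version by a Jensen-type reweighting of the hitting-time exponential moments, and finally to feed the new Lyapunov function into the Meyn--Tweedie implication $(HP1)\Rightarrow (HP3)$ of Theorem~\ref{thmpoinc} together with H\"older's inequality.

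For the first step, Theorem~\ref{thmpoinc} gives $(HP1)\Leftrightarrow (HP2)$, so there exist $\theta>0$ and a bounded open $U$ with $W_\theta(x):=\mathbb E_x\bigl(e^{\theta T_U}\bigr)<+\infty$ for every $x$. By the strong Markov property and the hypo-elliptic regularity recalled in Remark~\ref{remdomL}, $W_\theta\in C^\infty(U^c)$ and $LW_\theta=-\theta\,W_\theta$ on $U^c$. A smooth cut-off on a slight thickening $U_\varepsilon$ of $U$ produces a globally smooth, strictly positive function (still denoted $W_\theta$) lying in $\mathcal D(L)$, respecting the Neumann condition, and satisfying
$$ LW_\theta \leq -\theta\,W_\theta + b\,\mathbf 1_{\bar U_\varepsilon}\,. $$
Integrating against $\mu$ (with a standard truncation argument to justify $\int LW_\theta\,d\mu=0$ by invariance) yields $\int W_\theta\,d\mu\leq b\,\mu(\bar U_\varepsilon)/\theta$, so $W_\theta\in\mathbb L^1(\mu)$.

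For the second step, fix $p>1$ and set $W_p(x):=\mathbb E_x\bigl(e^{(\theta/p)T_U}\bigr)$. Jensen's inequality applied to the convex map $y\mapsto y^p$ gives
$$ W_p(x)^p \leq \mathbb E_x\bigl(e^{\theta T_U}\bigr) = W_\theta(x)\,, $$
so $W_p\in\mathbb L^p(\mu)$. The same Feynman--Kac plus cut-off procedure turns $W_p$ into a Lyapunov function in $\mathcal D(L)$ obeying $LW_p\leq -(\theta/p)\,W_p + b'\,\mathbf 1_{\bar U_\varepsilon}$; because $\bar U_\varepsilon$ is compact, the local modification stays bounded there and preserves membership in $\mathbb L^p(\mu)$.

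For the third step, apply $(HP1)\Rightarrow (HP3)$ from Theorem~\ref{thmpoinc} with the Lyapunov function $W_p$ to obtain constants $C,\beta_p>0$ with $\|P_t(x,\cdot)-\mu\|_{TV}\leq C\,W_p(x)\,e^{-\beta_p t}$. Integrating against $\nu$ and using H\"older with conjugate exponents $p,q$,
$$ \|P_t(\nu,\cdot)-\mu\|_{TV}\leq C\int W_p\,d\nu \leq C\,\|W_p\|_{\mathbb L^p(\mu)}\,\Bigl\|\frac{d\nu}{d\mu}\Bigr\|_{\mathbb L^q(\mu)}\,e^{-\beta_p t}\,, $$
giving the claim with $C(\nu)=C\,\|W_p\|_p\,\|d\nu/d\mu\|_q$ and a rate $\beta_p$ depending only on $p$. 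The main subtlety is the smooth patching of the hitting-time exponential moments near $U$ to land inside $\mathcal D(L)$ with the Neumann condition, together with the truncation required to legitimately write $\int LW_\theta\,d\mu=0$ for unbounded $W_\theta$; both are routine in the hypo-elliptic framework and already detailed in \cite{catbord,CGZ}.
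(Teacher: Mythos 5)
Your proof is correct and follows essentially the same route as the paper's: pass from $(HP1)$ to $(HP2)$ to obtain $W_\theta(x)=\mathbb E_x(e^{\theta T_U})$, cut off near $U$ to recover a genuine Lyapunov function and deduce $W_\theta\in\mathbb L^1(\mu)$, define $W_{\theta,p}(x)=\mathbb E_x(e^{(\theta/p)T_U})$ and use Jensen to get $W_{\theta,p}^p\le W_\theta$ hence $W_{\theta,p}\in\mathbb L^p(\mu)$, then feed this into $(HP3)$ and conclude by H\"older. You merely make explicit the Jensen step and the $\int LW_\theta\,d\mu\le 0$ integration that the paper leaves implicit.
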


\noindent In the symmetric case the situation is better understood. Indeed, $(HP1)$ implies the geometric ergodicity in $\mathbb L^2(\mu)$ $(HP5)$, so that using Riesz-Thorin interpolation theorem in an appropriate way (see \cite{CGR}) we have that provided $\frac{d\nu}{d\mu} \in \mathbb L^q(\mu)$ for some $1<q\leq 2$, $$\left \Vert P_t\left(\frac{d\nu}{d\mu}\right) -1 \right \Vert_{\mathbb L^q(\mu)} \leq K_q \, e^{- \, \frac{(q-1)}{q} \, \lambda_P(\mu) \, t} \, \left\Vert \frac{d\nu}{d\mu} - 1\right\Vert_{\mathbb L^q(\mu)} \, .$$
\noindent In this situation we thus have geometric convergence for a stronger topology.
\medskip

But the discussion preceding Proposition \ref{propqconv} furnishes a stronger result. Indeed $(HP1)$ yields $(HP3)$ 
\begin{equation}\label{eqH}
H(x) = \Vert P_t(x,.) - \mu \Vert_{TV} \leq \, C \, W(x) \, e^{- \, \beta \, t} \, ,
\end{equation}
 so that $H(x)$ converges to $0$ for all $x$ at a geometric rate. But we may replace $W$ by $W_p$ and $\beta$ by $\beta_p$ and get that actually $H$ converges to $0$ in all $\mathbb L^p(\mu)$ for $1\leq p < +\infty$, with a geometric rate depending on $p$. 
\medskip

Assume from now on that $\mu$ satisfies some $F$-Sobolev inequality, for some smooth $F$. Does it improve the previous results ? In this situation, the rate of convergence to equilibrium in total variation distance was studied in \cite{CatGui3}. The results we proved in the previous sections allow us to give a new and substantially simpler proof of some results contained in \cite{CatGui3}. Here is a result in this direction:
\begin{theorem}\label{thmconvvartot}
Under the assumptions of Theorem \ref{thmgoodinteg}, there exists $\lambda >0$ such that for all non-empty, open and bounded set $U$, $$W(.) = \mathbb E_.\left( e^{\lambda \, T_U}\right) \, \in \, \bigcap_{1\leq q<+\infty} \, \mathbb L^q(\mu) \, .$$ It thus follows that there exists some $\beta>0$ such that for all $\nu$ absolutely continuous w.r.t. $\mu$ such that $\frac{d\nu}{d\mu}$ belongs to $\mathbb L^p(\mu)$ for some $p>1$, $$\Vert P_t(\nu,.) - \mu \Vert_{TV} \leq \, C(\nu) \, e^{- \, \beta \, t} \, .$$ Actually $H$ defined in \eqref{eqH} converges to $0$ in all the $\mathbb L^p$'s with a rate $e^{-\beta \, t}$.
\end{theorem}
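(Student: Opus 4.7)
The plan is to construct, for each integer $q\geq 1$, a Lyapunov function of rate a fixed $\lambda>0$ (independent of $q$) belonging to $\mathbb L^q(\mu)$, and to dominate the candidate $W(x)=\mathbb E_x(e^{\lambda T_U})$ by such functions in each $\mathbb L^q$. The starting input is $W_{\theta,U,h}\in\mathbb L^1(\mu)$ from Theorem~\ref{thmgoodinteg}, which by Feynman--Kac satisfies $LW_{\theta,U,h}=-\theta h\,W_{\theta,U,h}$ on $U^c$. I would first set $g_q:=W_{\theta,U,h}^{1/q}$ and note two facts: since $\Gamma(f,f)\geq 0$ for the diffusion generator, the chain rule gives $Lg_q\leq -\frac{\theta h}{q}\,g_q$ on $U^c$, while $\int g_q^q\,d\mu=\int W_{\theta,U,h}\,d\mu<+\infty$ already places $g_q$ in $\mathbb L^q(\mu)$.

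Next, once $\lambda>0$ is fixed small, the crucial observation is that $h\to+\infty$ at infinity (because $(G^*)^{-1}$ grows to infinity and $d(\cdot,x_0)\to\infty$), so for every $q$ the level set $K_q:=\{h\leq q\lambda/\theta\}$ is bounded. Taking $V_q:=U\cup K_q$, the previous bound sharpens to $Lg_q\leq -\lambda\,g_q$ on $V_q^c$, so $g_q$ is a Lyapunov function for the bounded set $V_q$ at a rate $\lambda$ that does not depend on $q$. Ito's formula applied to $e^{\lambda(t\wedge T_{V_q})}g_q(X_{t\wedge T_{V_q}})$, as recalled in Remark~\ref{remdomL}, then yields $\mathbb E_x(e^{\lambda T_{V_q}})\leq g_q(x)/\alpha_q$ with $\alpha_q:=\inf_{\partial V_q}g_q>0$. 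Because a Poincar\'e inequality is assumed in Theorem~\ref{thmgoodinteg}, $(HP1)$ holds, so $y\mapsto\mathbb E_y(e^{\lambda T_U})$ is finite everywhere for $\lambda$ small; by elliptic regularity it is continuous and thus bounded by some $M_q<+\infty$ on the compact $\overline{V_q}$. The strong Markov property at $T_{V_q}$ (with $V_q\supset U$) then produces
$$\mathbb E_x\!\left(e^{\lambda T_U}\right)\,=\,\mathbb E_x\!\left(e^{\lambda T_{V_q}}\,\mathbb E_{X_{T_{V_q}}}(e^{\lambda T_U})\right)\,\leq\,M_q\,\mathbb E_x\!\left(e^{\lambda T_{V_q}}\right)\,\leq\,\frac{M_q}{\alpha_q}\,g_q(x),$$
so that $W(x)^q\leq(M_q/\alpha_q)^q\,W_{\theta,U,h}(x)\in\mathbb L^1(\mu)$ gives $W\in\mathbb L^q(\mu)$, with the same $\lambda$ for every $q$.

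The convergence conclusions then follow by feeding this into the Meyn--Tweedie implication $(HP1)\Rightarrow(HP3)$: since $g_q$ realizes $(HP1)$ at the uniform rate $\lambda$, $\|P_t(x,\cdot)-\mu\|_{TV}\leq C\,W(x)\,e^{-\beta t}$ with $\beta=\beta(\lambda)$ independent of $q$. H\"older's inequality with conjugate $q$ then shows $\int W\,d\nu<+\infty$ whenever $d\nu/d\mu\in\mathbb L^p(\mu)$, giving $\|P_t(\nu,\cdot)-\mu\|_{TV}\leq C(\nu)\,e^{-\beta t}$ with $\beta$ independent of $p$, while the pointwise estimate raised to the $p$-th power and integrated against $\mu$ furnishes $\|H\|_{\mathbb L^p(\mu)}\leq C\,\|W\|_{\mathbb L^p(\mu)}\,e^{-\beta t}$.

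The hard part will be separating $\lambda$ from $q$: the naive Jensen bound $\mathbb E(e^{\lambda T_U})^q\leq\mathbb E(e^{q\lambda T_U})$ forces $\lambda\sim 1/q$ and only recovers Proposition~\ref{propqconv}. The decisive ingredient is the unbounded growth of $h$ in Theorem~\ref{thmgoodinteg}, which pins the ``bad set'' $\{h<q\lambda/\theta\}$ inside a (growing but still bounded) compact, so that the natural Lyapunov rate $\theta h/q$ attached to $g_q$ keeps dominating $\lambda$ off a compact. The only other non-cosmetic step in the plan is then the strong Markov reduction from $T_{V_q}$ back to the prescribed $T_U$, which needs nothing beyond the compact boundedness of $\mathbb E_\cdot(e^{\lambda T_U})$ already supplied by $(HP1)$.
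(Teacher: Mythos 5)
Your argument is correct and is essentially the paper's proof, reorganized: you get $W^q\leq(\text{const})\,W_{\theta,U,h}$ by first raising $W_{\theta,U,h}$ to the power $1/q$ (chain rule plus $\Gamma\geq 0$) to produce a rate-$\lambda$ Lyapunov function on a growing compact $V_q$, then applying Ito/optional stopping and the strong Markov property; the paper reaches the identical bound more directly by applying the strong Markov property at the level set $H_R=\{h\leq R\}$, then Jensen on $T_R$, then the pathwise bound $q\lambda T_R\leq\int_0^{T_R}\theta h(X_s)\,ds$ for $\theta R>q\lambda$. The one small imprecision in your discussion is the claim that Jensen necessarily forces $\lambda\sim 1/q$: Jensen is harmless once applied to $T_R$ rather than $T_U$, which is exactly the paper's middle step, so the decisive device (as you correctly identify) is pushing the stopping time out to a larger level set of $h$, not avoiding Jensen.
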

\begin{proof}
Let $h$ as in Theorem \ref{thmgoodinteg}. The level sets $H_R=\{h \leq R\}$ of $h$ are smooth (since $F$ is smooth) compact sets. Denote by $T_R$ the hitting time of $H_R$. For $R$ large enough, $H_R$ contains $U$. Let $\lambda>0$ be such that $W(x)=\mathbb E_x(e^{\lambda T_U})<+\infty$ for all $x$. Such a $\lambda$ exists since $\mu$ satisfies a Poincar\'e inequality. If $y \in \bar{H_R}$, $W(y) \leq K(R) <+\infty$ using the regularity of $W$. If $x \notin H_R$, we have $$\mathbb E_x\left(e^{\lambda T_U}\right)= \mathbb E_x\left(e^{ \lambda \, T_R} \, \mathbb E_{X_{T_R}}\left(e^{\lambda \, T_U}\right)\right) \leq K(R) \, \mathbb E_x\left(e^{ \lambda \, T_R}\right) \, .$$ But now for $\theta R> q \, \lambda $, 
\begin{eqnarray*}
W^q(x) &\leq& K^q(R)  \,\mathbb E^q_x\left(e^{ \lambda \, T_R}\right) \leq K^q(R) \, \mathbb E_x\left(e^{ q \, \lambda \, T_R}\right)\\ &\leq&  K^q(R) \, \mathbb E_x\left(e^{\int_0^{T_R} \, \theta \, h(X_s) \, ds}\right) \, = K^q(R) \, W_{\theta,U,h}(x) \, .
\end{eqnarray*}
The first part of the Theorem follows from Theorem \ref{thmgoodinteg}. The second part is immediate.
\end{proof}

\begin{remark}
If $\frac{d\nu}{d\mu} \in \mathbb L^1(\mu)$, La Vall\'ee-Poussin theorem implies that there is some Young function $\phi(u)=u\psi(u)$ with $\psi$ growing to infinity, such that $\frac{d\nu}{d\mu}$ belongs to the Orlicz space $\mathbb L_\phi$. Hence the second part of the Theorem will follow from the first one and the H\"{o}lder-Orlicz inequality, once $W \in \mathbb L_{\phi^*}(\mu)$. But for the previous proof to work we need something like $\phi^*(e^{\lambda u}) \leq C \, e^{\phi**(\lambda) \, u}$ which is not true for $\phi^*$ growing faster than a power function. That is why the result is only stated for $\nu$ with a density belonging to some $\mathbb L^p$ space. 
\end{remark}
\medskip

This result is nor new nor surprising. For instance, when a logarithmic Sobolev inequality holds true, Theorem 2.13 in \cite{CatGui3} shows that there is exponential convergence to the equilibrium in total variation distance as soon as $\nu$ belongs to the space $\mathbb L \, \ln \mathbb L$. It is then a simple consequence of Pinsker's inequality and the entropic convergence to equimibrium. More generally, the courageous reader will find in the jungle of section 3 of \cite{CatGui3} similar results for general $F$-Sobolev inequalities. It should be interesting to recover these results by using Lyapunov functions. 
\medskip

Actually we should describe the problem as follows: we know that reinforcing functional inequalities from Poincar\'e to $F$-Sobolev, reinforces the Lyapunov condition and conversely in the symmetric case. Does a reinforced functional inequality reinforce the integrability of the Lyapunov function and conversely in the symmetric case ? In particular can we characterize (at least in the symmetric case) a functional inequality through integrability properties of (some) Lyapunov function ?  

\begin{example}\label{exunifconv}
Look at the simple symmetric case $L=\Delta - \nabla V.\nabla$ in the whole $\mathbb R^d$. Then it is easily seen that the following holds: there is an equivalence between
\begin{itemize}
\item $x.\nabla V(x) \geq \alpha \, |x|^2$ for large $|x|$,
\item $W_e(x)= e^{\alpha \, |x|^2/2}$ is a Lyapunov function, i.e. satisfies $(HP1)$,
\item $W_2(x)= |x|^2$ also satisfies $(HP1)$.
\end{itemize}
The first item is of course equivalent to the fact that $V$ is uniformly convex. We see that the behavior of various Lyapunov functions can be very different. They also imply various integrability properties and functional inequalities. Notice however that thanks to what we said previously we can directly make the following reasoning: if $V$ is uniformly convex, $W_2$ is a Lyapunov function and admits some exponential moment so that convergence to the equilibrium in total variation distance holds as soon as $\nu$ belongs to the space $\mathbb L \, \ln \mathbb L$ as expected. Indeed, we have by $(HP1)$ that for some $\delta$ and some $r$ (denoting $B_r$ the euclidean ball of radius $r$, and $x\in B_r^c$
$$\mathbb E_x(e^{\delta T_{B_r}})\le  W_2(x).$$
Note however that in fact $W_e$ satisfies a stronger Lyapunov condition, i.e.
$$LW_e(x)\le -\lambda |x|^2\,W_e(x)+b1_{B_r}$$ so that by the previous result on logarithmic , we have a stroonger integrability property linked to hitting times:
$$\mathbb E_x\left(e^{\delta \int_0^{T_{B_r}}X_s^2}\right)\le  W_e(x).$$
However, it does not seem possible to pass from this last control to the previous one.\\ Hence, finally, these results do not give any precise idea of the dependence in $x$ of $\mathbb E_x(e^{\lambda T_U})$ for a given bounded $U$. Actually this is a very difficult problem for which results are only known in the gaussian case (i.e. for the Ornstein-Uhlenbeck process). \hfill $\diamondsuit$
\end{example}

In the next section we shall look more into details at the case where one can find a bounded Lyapunov function.
\bigskip

\section{Coming down from infinity, uniform geometric ergodicity and Lyapunov functions.}\label{secomdown}

If $d\nu/d\mu$ only belongs to $\mathbb L^1$, it is interesting to look at a bounded Lyapunov function. What precedes allows us to state
\begin{proposition}\label{propdown}
The following statements are equivalent:
\begin{itemize}
\item  there exists $\lambda >0$ such that $$\sup_{x \in \bar D} \, \mathbb E_x\left(e^{\lambda T_U}\right) \, < \, +\infty$$ for one (or all) non empty, open and bounded set $U$,
\item there exists a bounded Lyapunov function satisfying $(HP1)$,
\item the process is uniformly geometrically ergodic, i.e. there exist $\beta >0$ and $C>0$ such that $$\sup_{x \in \bar D} \, \Vert P_t(x,.) - \mu \Vert_{TV} \leq \, C \, e^{- \, \beta \, t} \, .$$
\end{itemize}
In this case of course, for any initial probability measure $\nu$, $$\Vert P_t(\nu,.) - \mu \Vert_{TV} \leq \, C \, e^{- \, \beta \, t} \, .$$
\end{proposition}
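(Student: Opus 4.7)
The plan is a cyclic argument $(1)\Rightarrow (2)\Rightarrow (3)\Rightarrow (1)$, after which the final claim for arbitrary initial $\nu$ is immediate by integrating (3) against $\nu$. The implications $(1)\Leftrightarrow (2)$ and $(2)\Rightarrow (3)$ just recycle material already in the paper; the substantive step is $(3)\Rightarrow (1)$.

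For $(1)\Rightarrow (2)$ I would invoke Remark \ref{remdomL}: the function $W_U(x):=\E_x(e^{\lambda T_U})$ satisfies $LW_U=-\lambda W_U$ on $U^c$, and a standard modification inside $U$ yields a smooth global Lyapunov function for $(HP1)$ that inherits the uniform bound $\sup_x W_U(x)<+\infty$. Conversely, given a bounded $W$ with $w\le W\le \|W\|_\infty$ satisfying $LW\le -\lambda W+b\mathbf{1}_{\bar U}$, Itô's formula applied to $(t,y)\mapsto e^{\lambda t}W(y)$ stopped at $T_U$ gives $\E_x\!\left(e^{\lambda(t\wedge T_U)}W(X_{t\wedge T_U})\right)\le W(x)$ (the $b\mathbf{1}_{\bar U}$ term vanishes before $T_U$); letting $t\to\infty$ and using $W\ge w$ produces the uniform exponential moment $\E_x(e^{\lambda T_U})\le \|W\|_\infty/w$. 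For $(2)\Rightarrow (3)$, apply $(HP3)$ of Theorem \ref{thmpoinc} with this bounded $W$: $\Vert P_t(x,\cdot)-\mu\Vert_{TV}\le C\,W(x)\,e^{-\beta t}\le C\|W\|_\infty e^{-\beta t}$, uniformly in $x$.

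The main obstacle is $(3)\Rightarrow (1)$. Fix a non-empty, open and bounded $U$; ellipticity forces $\mu(U)>0$. By uniform geometric ergodicity, pick $t_0$ so large that $Ce^{-\beta t_0}\le \mu(U)/2$. Then for every $x$,
$$\P_x(X_{t_0}\in U)\;\ge\;\mu(U)-\Vert P_{t_0}(x,\cdot)-\mu\Vert_{TV}\;\ge\;\mu(U)/2\,,$$
and since $\{X_{t_0}\in U\}\subset\{T_U\le t_0\}$, one gets $\P_x(T_U>t_0)\le 1-\mu(U)/2=:\alpha<1$ uniformly in $x$. Iterating with the strong Markov property at the times $k t_0$ yields $\P_x(T_U>k t_0)\le \alpha^k$, still uniformly in $x$. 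Choosing any $\lambda<-\log\alpha/t_0$ and summing the geometric tail bound gives $\sup_x\E_x(e^{\lambda T_U})<+\infty$, i.e.\ (1). The final assertion on a general initial law $\nu$ follows from (3) by integration.
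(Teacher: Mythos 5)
Your cyclic argument is correct, and it fills in a gap the paper leaves implicit. The paper gives no explicit proof of Proposition~\ref{propdown}; it is prefaced only by ``What precedes allows us to state.'' Unpacking that, the equivalence of the first two items is exactly what Remark~\ref{remdomL} provides (the candidate Lyapunov function $W_U(x)=\E_x(e^{\lambda T_U})$ in one direction, It\^o's formula in the other, boundedness carried along in both), and the second item implies the third because the Meyn--Tweedie rate function appearing in $(HP3)$ may be taken to be the Lyapunov function itself, so a bounded $W$ yields a uniform bound. The piece that is \emph{not} covered by ``what precedes'' is the closing implication: Theorem~\ref{thmpoinc} only asserts $(HP3)\Rightarrow(HP1)$ in the symmetric case, and the paper otherwise relies tacitly on the classical Meyn--Tweedie/Doeblin equivalence between uniform geometric ergodicity and uniform exponential moments of return times. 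Your explicit Doeblin-type argument for $(3)\Rightarrow(1)$ --- choosing $t_0$ with $Ce^{-\beta t_0}\le\mu(U)/2$ so that $\inf_x\P_x(X_{t_0}\in U)\ge\mu(U)/2>0$, then iterating the Markov property at times $kt_0$ to get geometric decay of $\P_x(T_U>kt_0)$ uniformly in $x$ --- is exactly the right completion, is self-contained, and is the cleanest way to close the loop. Two cosmetic points: the iteration uses the ordinary Markov property at the deterministic times $kt_0$, not the strong Markov property; and in $(2)\Rightarrow(1)$ the remark ``$b\mathbf 1_{\bar U}$ vanishes before $T_U$'' should really invoke the fact that for a non-degenerate diffusion the process spends Lebesgue-null time on $\partial U$, or else work with $T_{\bar U}$ or a slightly enlarged open set; neither affects the validity of the proof.
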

There exists a stronger form of uniform exponential integrability, the notion of ``coming down from infinity'' which is used by people who are studying quasi-stationary distributions or more precisely Yaglom limits (see e.g. the recent book \cite{CMSM}). We shall use the following definition
\begin{definition}\label{defcoming}
We say that the process comes down from infinity if for all $a>0$ there exists some open, bounded subset $U_a$ such that $\sup_x \, \mathbb E_x \left(e^{a \, T_{U_a}}\right) \, < \, + \infty$.
\end{definition}
In one dimension, this property was related to the uniqueness of quasi-stationary distributions (QSD) and to the fact that $\infty$ is an entrance boundary, in \cite{CCLMMSM}. Uniqueness of a (QSD) also follows from the ultraboundedness property of the semi-group, even in higher dimension (see e.g. \cite{CM}). In \cite{CGZ} Proposition 5.3, we claimed that ultraboundedness is actually equivalent to coming down from infinity for one dimensional diffusion processes with generator $\Delta -\nabla V.\nabla$ satisfying some extra condition. D. Loukianova pointed out to us that the proof of this proposition in fact needs slightly more stringent assumptions (the function $z \mapsto F(z)/z$ therein is not necessarily non-increasing) for this equivalence to hold true. 
\medskip

Nevertheless, part of this result is true and we shall give a direct and simple proof. 
\medskip

Indeed, assume that $W$ satisfies $(HP1)$. We have seen that we can always assume that $W \in \mathbb L^2(\mu)$ and then $W \in \mathcal D(L))$. It follows that 
\begin{equation}\label{eqpt}
L (P_t W) = P_t(L W) \leq - \lambda \, P_t W +  P_t(\mathbf 1_{\bar U}) \, .
\end{equation}
Now assume that $R \leq d(x,\bar U) \leq 2R$. We have
\begin{eqnarray*}
P_t(\mathbf 1_{\bar U})(x) &=& \mathbb E_x \left(\mathbf 1_{X_t \in \bar U}\right)\\ &\leq& \mathbb E_x \left(\mathbf 1_{T_{\bar U}<t}\right) \leq \sup_{d(y,\bar U) \geq R} \, \mathbb Q_y(T_{\bar U}<t)
\end{eqnarray*}
where $Q_y$ denotes the law of the process $Y_.$ with the same generator $L$ but reflected on $d(z,\bar U)=2R$ (which can be assumed to be smooth) and starting from $y$. Indeed if the process $X_.$ hits $\bar U$ before to leave $\{d(z,\bar U)\leq 2R\}$, it coincides with $Y_.$ (with the same starting point) up to $T_{\bar U}$. If not, $X_.$ leaves $\{d(z,\bar U)\leq 2R\}$ before $T_{\bar U}$, but in order to hit $\bar U$ it has to come back to $\{d(z,\bar U)\leq 2R\}$ first, so that using the Markov property we may apply the same argument as before this time starting from $X_{T_R}$ where $T_R$ is the hitting time of  $\{d(z,\bar U)\leq 2R\}$. That is why the final upper bound contains the supremum over $y$.\\ Now, since all coefficients are smooth, they are bounded with bounded derivatives of any order in $\{d(z,\bar U)\leq 2R\}$ which is compact. It is then well known that $$\sup_{d(y,\bar U)\geq R}\mathbb Q_y(T_{\bar U}<t) \leq C \, e^{-cR/t}$$ for some constants $C$ and $c$ only depending on these bounds. Hence $$P_t(\mathbf 1_{\bar U})(x) \leq C \, e^{-cR/t} \, ,$$ as soon as $R\leq d(x,\bar U) \leq 2R$. If $d(x,\bar U) > 2R$, $T_U>T_R$, and we may apply again the Markov property to get the same upper bound.\\ Pick some $R>0$ once for all and choose $t>0$ in such a way that $$C e^{-cR/t} < \frac 12 \, \lambda \, w \, .$$ We thus have
\begin{eqnarray*}
L (P_t W)(x) &\leq& - \lambda \, P_t W(x) + \mathbf 1_{d(x,\bar U)\leq R} + C e^{-cR/t} \, \mathbf 1_{d(x,\bar U)\geq R}\\ &\leq& - \frac{\lambda}{2} \, P_t W(x) + \mathbf 1_{d(x,\bar U)\leq R} \, ,
\end{eqnarray*}
so that $P_t W$ is a new Lyapunov function with $\lambda/2$ and $\{d(x,\bar U)\leq R\}$ in place of $\lambda$ and $\bar U$ (of course $P_t W$ belongs to $\mathcal D(L)$ and satisfies $P_t W \geq w >0$).\\ We deduce immediately
\begin{theorem}\label{thmultra}
Assume that $(HP1)$ is satisfied (for example $\mu$ satisfies a Poincar\'e inequality) and that the semi-group $P_.$ is ultra-bounded, i.e. that $P_t$ maps continuously $\mathbb L^1$ into $\mathbb L^\infty$ for any $t>0$. Then the process comes down from infinity. 
\end{theorem}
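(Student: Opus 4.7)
The strategy is to complete the construction sketched in the paragraph preceding the theorem: smooth the Lyapunov function $W$ supplied by $(HP1)$ by applying $P_t$ to it, and verify that $P_tW$ is a \emph{bounded} Lyapunov function in the sense of $(HP1)$. From this, Proposition \ref{propdown} (third equivalent condition) provides directly the uniform-in-$x$ exponential integrability of the hitting time of a bounded open set, which is what the definition of coming down from infinity asks for. The two inputs that were not yet in place are that ultra-boundedness forces $P_tW$ to be bounded above, and that the reflection/heat-kernel estimate on $P_t\mathbf 1_{\bar U}$ makes $P_tW$ inherit the Lyapunov inequality outside a bounded region.

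By Remark \ref{remdomL} we may take $W \in \mathcal D(L) \cap \mathbb L^2(\mu)$; testing the Lyapunov inequality $LW \leq -\lambda W + \mathbf 1_{\bar U}$ against the invariant probability measure $\mu$ and using $\int LW \, d\mu = 0$ gives $\mu(W) \leq \mu(\bar U)/\lambda$, so in particular $W \in \mathbb L^1(\mu)$. Writing $P_tW = P_{t/2}(P_{t/2}W)$ and combining ultra-boundedness with the $\mathbb L^1(\mu)$-contraction of $P_{t/2}$ yields $\|P_tW\|_\infty \leq C(t/2)\, \mu(W) < +\infty$, while $P_tW \geq w > 0$ is immediate from $W \geq w$ and the fact that $P_t\mathbf 1 = \mathbf 1$.

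For the Lyapunov inequality, the semigroup-generator commutation $L(P_tW) = P_t(LW) \leq -\lambda P_tW + P_t\mathbf 1_{\bar U}$ reduces the matter to controlling $P_t\mathbf 1_{\bar U}(x)$ for $x$ far from $\bar U$. This is the reflection/heat-kernel estimate displayed just before the theorem: coupling $X$ up to the exit of the compact annulus $\{d(\cdot,\bar U) \leq 2R\}$ with the diffusion $Y$ sharing the same generator but reflected on the smooth non-characteristic boundary $\{d(\cdot,\bar U) = 2R\}$, one obtains the classical short-time hitting bound $\sup_{d(y,\bar U) \geq R} \mathbb Q_y(T_{\bar U} \leq t) \leq C e^{-cR/t}$ with constants depending only on uniform bounds of the coefficients on the annulus, and the strong Markov property at the hitting time of that annulus extends this to all $x$ with $d(x,\bar U) \geq R$. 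Choosing $R$ and $t$ so that $Ce^{-cR/t} \leq \lambda w/2$ and combining with $P_tW \geq w$ turns the commutation bound into $L(P_tW) \leq -(\lambda/2) P_tW + \mathbf 1_{\{d(\cdot,\bar U) \leq R\}}$, exhibiting $P_tW$ as a bounded Lyapunov function with trapping region the bounded open set $\{d(\cdot,\bar U) \leq R\}$.

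Feeding this bounded Lyapunov function into Proposition \ref{propdown} immediately yields the coming-down-from-infinity property. The main technical obstacle I expect is the reflection/Gaussian-hitting estimate on the compact annulus: one must confirm that reflection at the non-characteristic boundary $\{d(\cdot,\bar U) = 2R\}$ is well-defined in the present setup (uniform ellipticity or Bony-hypoellipticity with non-characteristic boundary) and that the bound $Ce^{-cR/t}$ holds with constants depending only on the (automatic) uniform bounds of the smooth coefficients on this compact set — standard but not automatic, and it is precisely where the global smoothness/boundedness assumptions on $\sigma$ and $b$ pay off.
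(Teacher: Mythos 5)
Your construction correctly shows that $P_tW$ is a \emph{bounded} Lyapunov function with the fixed rate $\lambda/2$, but this only gives the weaker ``uniformly geometrically ergodic'' conclusion of Proposition \ref{propdown} — namely the existence of \emph{some} $\lambda>0$ and a bounded open $U$ with $\sup_x \E_x(e^{\lambda T_U})<\infty$. ``Coming down from infinity'' in the sense of Definition \ref{defcoming} is strictly stronger: it requires that \emph{for every} $a>0$ there is a bounded open set $U_a$ with $\sup_x\E_x(e^{aT_{U_a}})<\infty$. With only the $(HP1)$ Lyapunov function as input, the rate $\lambda$ cannot be improved, and your argument stalls at a single value of $a$.

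The missing idea, which is the heart of the paper's proof, is to first upgrade the Lyapunov condition. Since $(HP1)$ implies the Poincar\'e inequality, combining it with ultra-boundedness shows the semigroup is hypercontractive, and Gross's theorem then gives the logarithmic Sobolev inequality. Corollary \ref{corlslyap} converts this into a Lyapunov function with an \emph{unbounded} coefficient, $LW \leq -\lambda\, d^2(\cdot,x_0)\,W + b\,\mathbf 1_{\bar U}$. This is self-improving in the rate: for any $a>0$, outside the bounded set $U_a = \{\lambda\,d^2(\cdot,x_0)\leq 2a\}$ one has $LW \leq -2a\,W + b_a\,\mathbf 1_{U_a}$. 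Only then is the $P_t$-smoothing you carried out applied, once per $a$, to produce a bounded Lyapunov function $W_a=P_tW$ at rate $a$ with a correspondingly larger trapping region; this delivers the full quantifier ``for all $a$'' required by Definition \ref{defcoming}. Your handling of the ultra-boundedness ($\|P_tW\|_\infty\leq C(t/2)\mu(W)$) and of the short-time hitting estimate on the annulus is correct and matches the paper's, but without the log-Sobolev detour the result you actually prove is Proposition \ref{propdown}, not Theorem \ref{thmultra}.
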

\begin{proof}
$(HP1)$ together with ultra-boundedness imply that the semi-group is hyper-contractive so that $\mu$ satisfies a logarithmic Sobolev inequality according to Gross theorem. Hence, we may apply Corollary \ref{corlslyap} and find a Lyapunov function $W$ such that $$LW \leq - \lambda \, d^2(.,x_0) \, W + b \, \mathbf 1_{\bar U}$$ for some bounded open subset $U$. Hence for all large enough $a>0$, $$LW \leq - 2a \, W \, + b_a \, \mathbf 1_{U_a}$$ with $U_a = \{x \, ; \, \lambda \, d^2(x,x_0) \, \leq 2a \}$ and $b_a= \sup_{x \in U_a} \, LW$, $a$ being large enough for $\bar U \subset U_a$. According to the previous discussion there exists a new Lyapunov function $W_a=P_tW$ for some adequat $t$ which is bounded and satisfies $LW_a(x) \leq - aW_a(x)$ for $x \in V_a=\{d(x,U_a)>R\}$, so that $$\sup_x \, \mathbb E_x \left(e^{aT_a}\right) < +\infty \quad \textrm{ for $T_a$ the hitting time of $V_a^c$.}$$
\end{proof}
\begin{remark}
The fact that $P_t W$ is still a Lyapunov function is interesting by itself, but except in the ultra-bounded situation, it does not furnish new results. For instance if we want to get a Lyapunov function that belongs to all the $\mathbb L^p$ spaces, we have to assume that $P_.$ is immediately hyper bounded, which is stronger than log-Sobolev, while Theorem \ref{thmconvvartot} gives the result under a simple $F$-Sobolev condition. Working a little bit more, one can extend the previous result to discrete valued Markov process, which are ultracontractive (birth-death processes,...).
\end{remark}

The previous proof indicates a way to prove ``coming down from infinity'' using Lyapunov functions, more precisely nested Lyapunov conditions
\begin{definition}
\item[$ (SLC)$] We shall say that a Super-Lyapunov condition is satisfied if  there exist a sequence $W_k\ge 1$, a sequence of increasing bounded sets $B_k$ growing to $\R^n$, an increasing sequence $\lambda_k>0$ growing to infinity and a sequence $b_k>0$ such that
$$LW_k\le -\lambda_k \, W_k +b_k \mathbf 1_{B_k}.$$
\end{definition}
For instance in the situation of Theorem \ref{thmgoodinteg}, $(SLC)$ is satisfied with the same Lyapunov function $W$, i.e. $W=W_k$, similarly as what we have done (in the case of log-Sobolev) in the previous proof. Of course if all the $W_k$ are bounded, $(SLC)$ is equivalent to ``coming down from infinity''.
\medskip

Assume that $(SLC)$ is satisfied. Denote by $T_k$ the hitting time of $B_k$ and choose some $\delta>0$. As we did in the proof of Theorem \ref{thmconvvartot}, for $k\geq k_0$ and $\lambda_{k_0}>\delta$ we have for $x \notin B_{k}$,
\begin{eqnarray*}
\mathbb E_x\left(e^{\delta T_{k_0}}\right)&=& \mathbb E_x\left(e^{ \delta \, T_{k}} \, \mathbb E_{X_{T_k}}\left(e^{\delta \, T_{k_0}}\right)\right) \leq \, \mathbb E_x\left(e^{ \delta \, T_k}\right) \sup_{y \in \partial B_k} \, \mathbb E_y\left(e^{\delta T_{k_0}}\right)\\ &\leq& 
\left(\mathbb E_x\left(e^{ \lambda_k \, T_k}\right)\right)^{\delta/\lambda_k} \,  \sup_{y \in \partial B_k} \, \mathbb E_y\left(e^{\delta T_{k_0}}\right) \\ &\leq& (W_k(x))^{\delta/\lambda_k} \, \sup_{y \in \partial B_k} \, \mathbb E_y\left(e^{\delta T_{k_0}}\right) \, .
\end{eqnarray*}
Define $$w_{k}= \sup_{y \in B_{k+1}-B_k} \, W_k(y) \, .$$
Proceeding by induction we have for all $x\notin B_{k_0+1}$
\begin{equation}\label{equnifgeom}
\mathbb E_x\left(e^{\lambda T_{k_0}}\right) \leq  C_{k_0} \prod_{k=k_0}^{+\infty} (w_k)^{\frac\delta{\lambda_k}}.
\end{equation}
As we see, if the sequence $\lambda_k$ goes to infinity, $\delta$ has no role in the convergence of the previous infinite product, as well as the value of $k_0$. We thus have
\begin{theorem}\label{thmslc}
If $(SLC)$ is satisfied, and for some $k_0$, $$\sum_{k=k_0}^{+\infty} \, \frac{\ln w_k}{\lambda_k}  < +\infty \, ,$$ the process comes down from infinity. Hence $(SLC)$ is satisfied with another sequence $(\lambda_k,W_k,B_k)$ where all the $W_k$ are bounded.
\end{theorem}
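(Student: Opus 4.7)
The first assertion is essentially immediate from the bound \eqref{equnifgeom} already established in the paragraph preceding the statement. The plan is simply to observe that, since all $W_k \geq 1$ and the sequence $\lambda_k$ grows to infinity, the hypothesis $\sum_{k \geq k_0} \frac{\ln w_k}{\lambda_k} < +\infty$ is exactly the convergence criterion for the infinite product
$$\prod_{k=k_0}^{+\infty} w_k^{\delta/\lambda_k} \, = \, \exp\!\left(\delta \sum_{k=k_0}^{+\infty} \frac{\ln w_k}{\lambda_k}\right) \, ,$$
which is then finite. Plugging this into \eqref{equnifgeom} yields $\sup_{x \notin B_{k_0+1}} \mathbb E_x(e^{\delta T_{k_0}}) < +\infty$ for every $\delta < \lambda_{k_0}$, and the supremum on the compact set $\bar B_{k_0+1}$ is trivially finite by continuity (itself a consequence of the (hypo)ellipticity used throughout the paper). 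Given an arbitrary $a>0$, one then picks $k_0$ with $\lambda_{k_0} > a$ (possible since $\lambda_k \to +\infty$) and takes $U_a = B_{k_0}$, $\delta = a$ in the above: this is exactly Definition \ref{defcoming}, so the process comes down from infinity.

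For the second assertion, the plan is to build a new $(SLC)$ sequence entirely from bounded Lyapunov functions using the coming-down property just proved. Fix any increasing sequence $\tilde\lambda_k \to +\infty$, and for each $k$ apply Definition \ref{defcoming} with $a = 2\tilde\lambda_k$ to obtain a bounded open set $U_k$ with
$$\widehat W_k(x) \, := \, \mathbb E_x\!\left(e^{2\tilde\lambda_k \, T_{U_k}}\right) \, \leq \, M_k < +\infty \, ,$$
and $\widehat W_k \geq 1$ everywhere. By the Itô/PDE argument recalled in Remark \ref{remdomL}, $\widehat W_k$ satisfies $L\widehat W_k = -2\tilde\lambda_k \widehat W_k$ on $U_k^c$. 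A standard smoothing of $\widehat W_k$ inside a bounded set $B_k \supset \bar U_k$, enforcing also the Neumann condition on $\partial D \cap B_k$, yields a smooth bounded $W_k$ with $W_k \geq 1$ and
$$LW_k \, \leq \, - \tilde\lambda_k \, W_k \, + \, b_k \, \mathbf{1}_{B_k} \, ,$$
for a suitable $b_k$ (the factor $2$ gives some slack to absorb the local modification). This provides the desired $(SLC)$ sequence with all $W_k$ bounded.

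The only non-trivial point is the convergence of the infinite product in the first step, which is immediate from the log-sum hypothesis. The second step is routine given the framework already in place (Remark \ref{remdomL}, plus the (hypo)elliptic regularity used throughout): the subtle issue is just the bookkeeping involved in the local smoothing of $\widehat W_k$ near $\partial U_k$ and on $\partial D$, but this mirrors what has already been done repeatedly in the paper and requires no new argument.
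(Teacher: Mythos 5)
Your proof is correct and follows the same route as the paper. For the first assertion the paper's own argument is exactly the discussion preceding the theorem culminating in \eqref{equnifgeom}, and you do precisely what is intended: observe that $\prod_k w_k^{\delta/\lambda_k}=\exp(\delta\sum_k \ln w_k/\lambda_k)$, use $W_k\ge 1$ so every factor is $\ge 1$, note that $k_0$ may be shifted upward so that $\lambda_{k_0}>a$ (which is what permits taking $\delta=a$ in the Jensen step $\mathbb E_x(e^{\delta T_k})\le(\mathbb E_x(e^{\lambda_k T_k}))^{\delta/\lambda_k}$), and close the bound on the compact $\bar B_{k_0+1}$ by regularity. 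The second assertion is stated in the paper without a separate proof, being viewed as the easy half of the remark ``if all $W_k$ are bounded, $(SLC)$ is equivalent to coming down from infinity''; your explicit construction of the new sequence via $\widehat W_k(x)=\mathbb E_x(e^{2\tilde\lambda_k T_{U_k}})$ and Remark \ref{remdomL} (including the factor $2$ to absorb the local smoothing near $\partial U_k$) is exactly the argument the paper is implicitly invoking, and is a correct way to make it precise. The only bookkeeping detail you leave implicit — and which is harmless to add — is that the sets $B_k$ produced by Definition \ref{defcoming} need not be nested or exhausting, but one may always enlarge them to an increasing sequence covering $\R^n$ without affecting either the boundedness of $W_k$ or the Lyapunov inequality outside $B_k$.
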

\begin{example}
In one dimension consider (for $\beta>0$) the potential $$V_\beta(x)=(1+x^2) \, \ln^\beta(1+x^2) \, .$$ For $W(x) = e^{x^2/2}$ it holds for $|x|$ large enough, $$L_\beta W(x) \leq - \, (1-\varepsilon) \, x^2 \, \ln^\beta(1+x^2) \, W(x)\, .$$ Hence if $B_k=B(0,R_k)$ we have $\ln (w_k) \leq \frac 12 \, R_k^2$, while for $|x|>R_k$, we may choose $\lambda_k = c \, R_k^2 \, \ln^\beta(R_k)$. It follows that the process comes down from infinity for any $\beta>0$ by choosing for instance $R_k=\exp k^{2/\beta}$  in Theorem \ref{thmslc}. \\ It is known however that the semi-group is ultra-bounded if and only if $\beta>1$ (see e.g \cite{KKR}). Hence we have some examples of processes coming down from infinity for which the semi-group is not ultra-bounded (but is immediately hyper-contractive as shown in \cite{KKR}). 
\end{example}
\bigskip

\section{Integrability and Lyapunov conditions.}\label{secintegral}

In this section we will study the interplay between Lyapunov conditions and integrability conditions. So it is not a restriction to look at $L=\Delta - \nabla V.\nabla$ such that $\mu$ is symmetric (or reversible).

Let us recall that integrability conditions are related to transportation inequalities. Indeed, if $\mu$ satisfies a Gaussian integrability condition, i.e.
$$\exists \delta>0, x_0\qquad \int e^{\delta d(x,x_0)^2}d\mu<\infty$$
then 
$$(T1) \quad \forall \nu\in {\mathcal P}_1,\qquad W_1(\nu,\mu)\le \sqrt{2C H(\nu|\mu)}$$
for some explicit $C$ (see \cite{DGW}), where $W_1$ and $H$ denote respectively the $1$-Wasserstein distance and  the relative entropy (or Kullback-Leibler information). The converse, $(T1)$ implies gaussian integrability is also true and due to K. Marton \cite{Mar96}. Gozlan \cite{Goz06} has generalized the approach to various type of $(T1)$ type transportation inequalities, getting that integrability property 
$$\exists \delta>0, x_0\qquad \int e^{ \alpha(\delta d(x,x_0))}d\mu<\infty$$
for $\alpha$ convex (quadratic near 0) is equivalent to $\alpha (W_1(\nu,\mu)\le C\,H(\nu|\mu)$ for very $\nu$. The links between functional inequalities and integrability properties are usually consequences of concentration properties of lipschitzian function (as Herbst argument for logarithmic Sobolev inequality). A more direct approach is taken in \cite{CatGui1}, where in particular an equivalence between a Poincar\'e inequality and a weak form of a $(T2)$ inequality (involving $W_2$) is shown. Note however that we won't require here some local inequality, so that we cannot hope to get better functional inequalities than $(T1)$ like.
\medskip

We shall now investigate the relationship between the existence of Lyapunov functions and integrability properties. In the sequel we will thus assume the following 
\begin{definition}\label{defphilyap}
Let $\phi$ be a (strictly) positive function. We say that $(\phi-Lyap)$ is satisfied if there exist $W > 0$, $b\geq 0$ and a bounded open subset $C$ such that  $$LW \leq - \, \phi^2 \, W + b \, \mathbf 1_C \, .$$
\end{definition} 
If we define $U = \ln(W)$ it is immediately seen that 
\begin{equation}\label{eqlog}
LU \, + \, |\nabla U|^2 \, \leq \, - \phi^2 + \, \frac{b}{\min_{\bar C}W} \, \mathbf 1_C \, .
\end{equation}
Using this inequality it is shown in \cite{CGW} lemma 3.4 that for any smooth $h$,
\begin{equation}\label{cruc}
\int h^2 \, \phi^2 d\mu \, \leq \, \mathcal E(h) \, + \, \frac{b}{\min_{\bar C}W} \, \int_C \, h^2 \, d\mu \, .
\end{equation}
Of course, if $\int_C \, h \, d\mu=0$, and provided $C$ is smooth enough (which is not a restriction), we may apply the Holley-Stroock perturbation argument and get $$\int h^2 \, \phi^2 d\mu \, \leq \, \left(1+\frac{b}{\min_{\bar C}W} \, e^{Osc_{\bar C}V} \, C_P(C,dx)\right) \mathcal E(h)$$ where $Osc_U(V)$ denotes the oscillation of $V$ on the subset $U$ and $C_P(U,dx)$ the Poincar\'e constant of the uniform measure on $U$. If $\phi$ goes to infinity at infinity, the previous inequality is thus stronger than the Poincar\'e inequality (as expected). It is well known that the Poincar\'e inequality implies the exponential integrability of $c \, d(.,x_0)$ for some small enough positive $c$, as the logarithmic Sobolev inequality implies the exponential integrability of $c \, d^2(.,x_0)$ for some small enough positive $c$ (we previously recalled this result). 
\medskip

In \cite{liu1}, Yuan Liu proved that $(\phi-Lyap)$ with $\phi(x)=a \, d(x,x_0)$ implies the exponential integrability of $c \, d^2(.,x_0)$ for $c < a$. This result is not surprising since in this case $(\phi-Lyap)$ is exactly $(HLS1)$ which is equivalent to the logarithmic-Sobolev inequality, at least if the curvature is bounded from below according to Theorem \ref{thmls}, hence implies as a consequence the quoted exponential integrability. 

\noindent We shall here follow and generalize Liu's argument in order to answer the following question: what are sufficient conditions on $\psi$ for $c \, \psi^2$ to be exponentially integrable when $(\phi-Lyap)$ is satisfied ?

\noindent A particularly interesting example is the case when $\phi(x)=a \, d^p(x,x_0)$. Indeed, for $p=0$ we know that a Poincar\'e inequality is satisfied and for $p=1$ a log-Sobolev inequality is satisfied under a curvature assumption. The use of curvature is very specific and strongly connected to $p=1$. Hence for $0<p<1$ we do not know whether $(\phi-Lyap)$ implies some natural functional inequality or not, even for bounded from below curvature. We shall see however that $c \, d^{p+1}(.,x_0)$ is exponentially integrable for some small enough positive $c$ and all $0 \leq p$, so that a generalized transportation inequality holds. 
\bigskip

For $\psi\ge 0$, introduce
$$\beta_n:=\int \psi^{2n}d\mu \, .$$
We will use \eqref{cruc} to initiate a recurrence on $\beta_n$ using the notation $\bar b= \frac{b}{\min_{\bar C}W}$,
\begin{eqnarray*}
\beta_n&=&\int \frac{\psi^{2n}}{\phi^2}\phi^2d\mu\\
&\le& \int\left|\nabla\left(\frac{\psi^n}\phi\right)\right|^2d\mu+\bar b\int_C\frac{\psi^{2n}}{\phi^2}d\mu.
\end{eqnarray*} 
Let us focus on the first term
\begin{eqnarray*}
 \int\left|\nabla\left(\frac{\psi^n}\phi\right)\right|^2d\mu&=&\int\left|n\psi^{n-1}\frac{\nabla \psi}{\phi}-\psi^n\frac{\nabla \phi}{\phi^2}\right|^2 \, d\mu\\
 &=&n^2\int\frac{\psi^2|\nabla \psi|^2}{\phi^2}\,\psi^{2(n-2)}+2n\int \frac{\psi\nabla\phi.\nabla\psi}{\phi^3}\psi^{2(n-1)}d\mu
 +\int \frac{|\nabla \phi|^2}{\phi^4}\psi^{2n}d\mu.
\end{eqnarray*}

Let us assume then that there exists $\alpha,\beta,\gamma>0$ and $0<\delta<1$, (at least outside a compact $K$, and if so choose $\psi$ to be 0 on $K$) such that
\begin{equation}\label{eqcondition}
\frac{\psi^2|\nabla \psi|^2}{\phi^2}\le \alpha,\quad\left|\frac{\psi\nabla\phi.\nabla\psi}{\phi^3}\right|\le \beta,\quad \frac{|\nabla \phi|^2}{\phi^4}\le\delta,\quad \sup_C\frac{\psi^2}{\phi^2}\le \gamma \, .
\end{equation}
Under these assumptions, we get that
$$\beta_n\le \frac{\alpha}{1-\delta} n^2\beta_{n-2}+\frac{2n\beta+\gamma \bar b}{1-\delta}\beta_{n-1} \, .$$
Combined with a direct consequence of Cauchy-Schwarz inequality, we obtain
$$\beta_n\le \sqrt{\beta_{n+1}\beta_{n-1}}\le\left(\frac{\alpha}{1-\delta} (n+1)^2\beta_{n-1}+\frac{2(n+1)\beta+\gamma \bar b}{1-\delta}\beta_n\right)^{\frac12}\beta_{n-1}^{\frac12}.$$
We then easily deduce that$$
\beta_n\le\frac12\left(\frac{2(n+1)\beta+\gamma \bar b}{1-\delta}+\sqrt{\left(\frac{2(n+1)\beta+\gamma \bar b}{1-\delta}\right)^2+4\frac{\alpha}{1-\delta} (n+1)^2}\right)\beta_{n-1}\le a n \, \beta_{n-1}$$
for
$$a>\frac12\left(\frac{2\beta}{1-\delta}+\sqrt{\frac{4\beta^2}{(1-\delta)^2}+4\frac{\alpha}{1-\delta}}\right)$$ and $n$ large enough. It then follows that for some $c$
$$\beta_n\le ca^n n!$$
and thus we have that for $a'<a^{-1}$
$$\int e^{a' \psi^2}d\mu \le \frac {c}{1-a'a}.$$

Now come back to \eqref{eqcondition}. I f we assume that $\phi$ is bounded from below by some positive constant in $C$, the condition on $\gamma$ is satisfied as soon as $\phi$ and $\psi$ are, say, continuous. The condition on $\delta$ says that $1/\phi$ is a contraction outside some compact set. Assuming this condition we see that both conditions on $\alpha$ and $\beta$ are the same, i.e. $|\nabla (\psi^2)|/\phi$ is bounded. Thus
 
\begin{theorem}\label{thminteg}
Assume that $(\phi-Lyap)$ is satisfied for some function $\phi$ such that $\phi$ is bounded from below by a positive constant on $C$ and $1/\phi$ is $\eta$-Lipschitz for some $\eta<1$. Then for all function $\psi^2$ such that $|\nabla (\psi^2)|/\phi$ is bounded, there exists $a'>0$ such that $\int e^{a' \psi^2}d\mu < +\infty$.
\end{theorem}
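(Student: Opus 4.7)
The plan is to verify that the four boundedness conditions in \eqref{eqcondition} follow from the hypotheses of the theorem, and then invoke the recurrence argument already laid out in the discussion preceding the statement.

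First, I would translate the hypotheses into the quantities appearing in \eqref{eqcondition}. The assumption that $1/\phi$ is $\eta$-Lipschitz says $|\nabla(1/\phi)| = |\nabla\phi|/\phi^2 \le \eta$, so $|\nabla\phi|^2/\phi^4 \le \eta^2 =: \delta < 1$. The assumption that $|\nabla(\psi^2)|/\phi = 2\psi|\nabla\psi|/\phi$ is bounded by some $M$ gives immediately $\psi^2|\nabla\psi|^2/\phi^2 \le M^2/4 =: \alpha$ and, by Cauchy-Schwarz on the pointwise scalar product, $|\psi\,\nabla\phi\cdot\nabla\psi|/\phi^3 \le (|\nabla\phi|/\phi^2)(\psi|\nabla\psi|/\phi) \le \eta M/2 =: \beta$. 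Finally, since $\phi$ is bounded below by a positive constant on the compact $C$ and $\psi^2$ is bounded there (cutting $\psi$ off near $C$ if necessary to make the test functions $\psi^n/\phi$ smooth and compactly supported outside a neighborhood of $C$, which only alters $\beta_n$ by a harmless constant), $\sup_C \psi^2/\phi^2 =: \gamma < +\infty$.

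Second, I would apply \eqref{cruc} with test function $h = \psi^n/\phi$, expand $\mathcal{E}(\psi^n/\phi)$ as already displayed, and use the four bounds to absorb the resulting $\delta\beta_n$ contribution to the left-hand side. This yields the recurrence
\[
\beta_n \le \frac{\alpha}{1-\delta}\, n^2\,\beta_{n-2} + \frac{2n\beta + \gamma\bar{b}}{1-\delta}\,\beta_{n-1}.
\]
Combining this with the log-convexity inequality $\beta_n \le \sqrt{\beta_{n-1}\beta_{n+1}}$ (Cauchy--Schwarz on $\psi^{2n} = \psi^{n-1}\cdot\psi^{n+1}$) and solving the resulting mixed inequality, as in the algebra spelled out just before the statement, one extracts a clean linear recurrence $\beta_n \le a\, n\, \beta_{n-1}$ valid for all sufficiently large $n$, with
\[
a > \tfrac{1}{2}\Bigl(\tfrac{2\beta}{1-\delta} + \sqrt{\tfrac{4\beta^2}{(1-\delta)^2} + \tfrac{4\alpha}{1-\delta}}\Bigr).
\]

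Finally, iterating this gives $\beta_n \le C a^n n!$, so $\sum_{n\ge 0} (a')^n \beta_n / n!$ converges for every $a' < 1/a$ and therefore $\int e^{a'\psi^2}\,d\mu < +\infty$, as required. The main (minor) technical point is the transition from the two-step recurrence involving both $\beta_{n-1}$ and $\beta_{n-2}$ to a one-step recurrence linear in $n$: this is exactly what the Cauchy-Schwarz interpolation accomplishes, and it is the only nontrivial algebraic step. Everything else is a direct book-keeping of the bounds provided by the hypotheses and a standard Taylor series argument for exponential moments.
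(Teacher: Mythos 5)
Your proposal is correct and follows precisely the route the paper itself takes: you verify that the four bounds in \eqref{eqcondition} are exactly the stated hypotheses (with $\delta=\eta^2<1$ absorbing the diagonal term, and the pointwise Cauchy--Schwarz giving the cross-term bound), and then invoke the $\beta_n$-recurrence together with the log-convexity inequality to reach $\beta_n\le a\,n\,\beta_{n-1}$ and sum the exponential series. The only imprecise remark is that cutting $\psi$ off on a compact set alters $\beta_n$ not by a fixed constant but by a term of order $(\sup_K\psi^2)^n\mu(K)$; this still does no harm since it is dominated by $a^n n!$, so the conclusion is unchanged.
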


\begin{example}
\begin{enumerate}
\item For a constant $\phi$ we recover the fact that any Lipschitz function has an exponential moment, hence the usual concentration result once a Poincar\'e inequality is satisfied. For $\phi(x)= a d^2(x,x_0)$, we recover the gaussian nature of the tails once a log-Sobolev inequality is satisfied.
\item If we take $\phi(x)= a d^p(x,x_0)$ for $p\geq 0$ we obtain exponential integrability of functions $g$ such that $|\nabla g| \leq C \, |x|^{p}$, hence for instance for $g(x)=d^{p+1}(x,x_0)$. As for the Poincar\' e or the log-Sobolev case, this exponential integrability is sharp, since for $\mu(dx)=Z_p \, e^{-|x|^{p+1}}$, $(\phi-Lyap)$ is satisfied with $\phi(x)=c|x|^p$ and $W(x)= e^{a |x|^p}$ for a small enough $a$.
\item One may of course also consider $\phi^2(x)=ad^{-1}(x,x_0)$, if $a>1$, which can be obtained for Cauchy type measures (see \cite{CGGR} for details). In this case we may take $\psi^2$ behaving as $\log(|x|)$ at infinity recovering polynomial integrability (also quite sharply). Note also that in \cite[Th. 5.1]{CGGR}, it is shown how a converse Poincar\'e inequality (obtained by a $(\phi-Lyap)$ condition and a local Poincar\'e inequality) plus an involved integrability condition implies a weak Poincar\'e inequality. This integrability condition can be checked using Th. \ref{thminteg} for example for Cauchy type measure (via tedious computations), so that only a Lyapunov condition and local inequality are also sufficient for weak Poincar\'e inequality.
\end{enumerate}
\end{example}

Of course there is no converse statement for Theorem \ref{thminteg}, since for instance exponential integrability for the distance cannot imply a Poincar\'e inequality (disconnected domains for example).
\medskip

Finally recall that that $(\phi-Lyap)$ is equivalent to the following $$\mathbb E_x \left(e^{\int_0^{T_C} \, \phi^2(X_s) \, ds}\right) < +\infty \, ,$$ where in our situation $$X_t = X_0 + \sqrt 2 \, W_t - \int_0^t \, \nabla V(X_s) \, ds \, .$$
It would be particularly interesting to show that weak Poincar\'e inequality implies back Lyapunov condition as done in section 2 for Poincar\'e inequality.

\bibliographystyle{plain}
\bibliography{cg--}

\begin{thebibliography}{10}

\bibitem{Aid98}
S.~Aida.
\newblock Uniform positivity improving property, {S}obolev inequalities and
  spectral gaps.
\newblock {\em J. Funct. Anal.}, 158:152--185, 1998.

\bibitem{aida}
S.~Aida, T.~Masuda, and I.~Shigekawa.
\newblock Logarithmic {S}obolev inequalities and exponential integrability.
\newblock {\em J. Funct. Anal.}, 126(1):83--101, 1994.

\bibitem{logsob}
C.~An{\'e}, S.~Blach{\`e}re, D.~Chafa{\"\i}, P.~Foug{\`e}res, I.~Gentil,
  F.~Malrieu, C.~Roberto, and G.~Scheffer.
\newblock {\em Sur les in{\'e}galit{\'e}s de {S}obolev logarithmiques},
  volume~10 of {\em Panoramas et Synth{\`e}ses}.
\newblock Soci{\'e}t{\'e} {M}ath{\'e}matique de {F}rance, Paris, 2000.

\bibitem{BBCG}
D.~Bakry, F.~Barthe, P.~Cattiaux, and A.~Guillin.
\newblock A simple proof of the {P}oincar\'e inequality for a large class of
  probability measures.
\newblock {\em Electronic Comm. in Probab.}, 13:60--66, 2008.

\bibitem{BCG}
D.~Bakry, P.~Cattiaux, and A.~Guillin.
\newblock Rate of convergence for ergodic continuous {M}arkov processes :
  {L}yapunov versus {P}oincar\'e.
\newblock {\em J. Func. Anal.}, 254:727--759, 2008.

\bibitem{bakry-emery}
D.~Bakry and M.~{\'E}mery.
\newblock Diffusions hypercontractives.
\newblock In {\em S\'eminaire de probabilit\'es, XIX, 1983/84}, volume 1123 of
  {\em Lecture Notes in Math.}, pages 177--206. Springer, Berlin, 1985.

\bibitem{bakgl}
D.~Bakry, I.~Gentil, and M.~Ledoux.
\newblock {\em Analysis and Geometry of {M}arkov Diffusion Operators}, volume
  348 of {\em Grundlehren der mathematischen Wissenschaften}.
\newblock Springer, 2014.

\bibitem{BCR2}
F.~Barthe, P.~Cattiaux, and C.~Roberto.
\newblock Concentration for independent random variables with heavy tails.
\newblock {\em AMRX}, 2005(2):39--60, 2005.

\bibitem{BCR1}
F.~Barthe, P.~Cattiaux, and C.~Roberto.
\newblock Interpolated inequalities between exponential and {G}aussian,
  {O}rlicz hypercontractivity and isoperimetry.
\newblock {\em {R}ev. {M}at. {I}ber.}, 22(3):993--1066, 2006.

\bibitem{BCR3}
F.~Barthe, P.~Cattiaux, and C.~Roberto.
\newblock Isoperimetry between exponential and {G}aussian.
\newblock {\em Electronic J. Prob.}, 12:1212--1237, 2007.

\bibitem{BK08}
F.~Barthe and A.~V. Kolesnikov.
\newblock Mass transport and variants of the logarithmic {S}obolev inequality.
\newblock {\em J. Geom. Anal.}, 18(4):921--979, 2008.

\bibitem{bgl}
S.~G. Bobkov, I.~Gentil, and M.~Ledoux.
\newblock Hypercontractivity of {H}amilton-{J}acobi equations.
\newblock {\em J. Math. Pu. Appli.}, 80(7):669--696, 2001.

\bibitem{bobkov-gotze}
S.~G. Bobkov and F.~G{\"o}tze.
\newblock Exponential integrability and transportation cost related to
  logarithmic {S}obolev inequalities.
\newblock {\em J. Funct. Anal.}, 163(1):1--28, 1999.

\bibitem{BL97}
S.G. Bobkov and M.~Ledoux.
\newblock Poincar\'e's inequalities and {T}alagrandÕs concentration phenomenon
  for the exponential distribution.
\newblock {\em Prob. Theor. Rel. Fields}, 107:383--400, 1997.

\bibitem{bony}
J.~M. Bony.
\newblock Principe du maximum, in\'egalit\'e de {H}arnack et unicit\'e du
  probl\`eme de {C}auchy pour les op\'erateurs elliptiques d\'eg\'en\'er\'es.
\newblock {\em Ann. Inst. Fourier}, 19(1):277--304, 1969.

\bibitem{CKexit}
R.~Carmona and A.~Klein.
\newblock Exponential moments for hitting times of uniformly ergodic {M}arkov
  processes.
\newblock {\em Ann. Prob.}, 11:648--655, 1983.

\bibitem{catbord}
P.~Cattiaux.
\newblock Stochastic calculus and degenerate boundary value problems.
\newblock {\em Ann. Inst. Fourier}, 42(3):541--624, 1992.

\bibitem{cat4}
P.~Cattiaux.
\newblock A pathwise approach of some classical inequalities.
\newblock {\em Potential Analysis}, 20:361--394, 2004.

\bibitem{CCLMMSM}
P.~Cattiaux, P.~Collet, A.~Lambert, S.~Martinez, S.~M\'el\'eard, and
  J.~San~Martin.
\newblock Quasi-stationarity distributions and diffusion models in population
  dynamics.
\newblock {\em Ann. Probab.}, 37:1926--1969, 2009.

\bibitem{CGG}
P.~Cattiaux, I.~Gentil, and A.~Guillin.
\newblock Weak logarithmic {S}obolev inequalities and entropic convergence.
\newblock {\em Probab. Theory Related Fields}, 139(3-4):563--603, 2007.

\bibitem{CGGR}
P.~Cattiaux, N.~Gozlan, A.~Guillin, and C.~Roberto.
\newblock Functional inequalities for heavy tailed distributions and
  applications to isoperimetry.
\newblock {\em Elec. J. Prob.}, 15:346--385, 2010.

\bibitem{CatGui1}
P.~Cattiaux and A.~Guillin.
\newblock On quadratic transportation cost inequalities.
\newblock {\em J. Math. Pures Appl.}, 88(4):341--361, 2006.

\bibitem{CatGui2}
P.~Cattiaux and A.~Guillin.
\newblock Deviation bounds for additive functionals of {M}arkov processes.
\newblock {\em {ESAIM} {P}robability and {S}tatistics}, 12:12--29, 2008.

\bibitem{CatGui3}
P.~Cattiaux and A.~Guillin.
\newblock Trends to equilibrium in total variation distance.
\newblock {\em Ann. Inst. Henri Poincar\'e Probab. Stat.}, 45(1):117--145,
  2009.

\bibitem{CatGui4}
P.~Cattiaux and A.~Guillin.
\newblock Functional inequalities via {L}yapunov conditions.
\newblock In {\em Optimal transportation. Theory and applications.}, volume 413
  of {\em London Mathematical Society Lecture Notes Series}, pages 274--287.
  Cambridge Univ. Press,, 2014.

\bibitem{CGR}
P.~Cattiaux, A.~Guillin, and C.~Roberto.
\newblock Poincar\'e inequality and the $l^p$ convergence of semi-groups.
\newblock {\em Elec. Comm. in Prob.}, 15:270--280, 2010.

\bibitem{CGWW}
P.~Cattiaux, A.~Guillin, F.Y. Wang, and L.~Wu.
\newblock Lyapunov conditions for super {P}oincar\'e inequalities.
\newblock {\em J. Funct. Anal.}, 256(6):1821--1841, 2009.

\bibitem{CGW}
P.~Cattiaux, A.~Guillin, and L.~Wu.
\newblock A remark on {T}alagrand's transportation inequality and logarithmic
  {S}obolev inequality.
\newblock {\em Probab. Theory Relat. Fields}, 148:285--304, 2010.

\bibitem{CGW2}
P.~Cattiaux, A.~Guillin, and L.~Wu.
\newblock Some remarks on weighted logarithmic {S}obolev inequalities.
\newblock {\em Indiana University Mathematics Journal}, 60(6):1885--1904, 2011.

\bibitem{CGZ}
P.~Cattiaux, A.~Guillin, and P.~A. Zitt.
\newblock Poincar\'e inequalities and hitting times.
\newblock {\em Ann. Inst. Henri Poincar\'e. Prob. Stat.}, 49(1):95--118, 2013.

\bibitem{CM}
P.~Cattiaux and S.~M\'el\'eard.
\newblock Competitive or weak cooperative stochastic {L}otkaÐ{V}olterra systems
  conditioned to non extinction.
\newblock {\em J. Math. Biol.}, 60:797--829, 2010.

\bibitem{CMSM}
P.~Collet, S.~Martinez, and J.~San~Martin.
\newblock {\em Quasi-{S}tationary {D}istributions.}
\newblock Probability and its applications. Springer, 2013.

\bibitem{Dav}
E.~B. Davies.
\newblock {\em Heat kernels and spectral theory}.
\newblock Cambridge University Press, 1989.

\bibitem{DGW}
H.~Djellout, A.~Guillin, and L.~Wu.
\newblock Transportation cost-information inequalities for random dynamical
  systems and diffusions.
\newblock {\em Ann. Prob.}, 32(3):2702--2732, 2004.

\bibitem{DFG}
R.~Douc, G.~Fort, and A.~Guillin.
\newblock Subgeometric rates of convergence of {$f$}-ergodic strong {M}arkov
  processes.
\newblock {\em Stochastic Process. Appl.}, 119(3):897--923, 2009.

\bibitem{DMT}
N.~Down, S.~P. Meyn, and R.~L. Tweedie.
\newblock Exponential and uniform ergodicity of {M}arkov processes.
\newblock {\em Ann. Prob.}, 23(4):1671--1691, 1995.

\bibitem{GM}
P.~W. Glynn and S.~P. Meyn.
\newblock A {L}iapounov bound for solutions of the {P}oisson equation.
\newblock {\em Ann. Probab.}, 24(2):916--931, 1996.

\bibitem{Goz06}
Nathael Gozlan.
\newblock Integral criteria for transportation-cost inequalities.
\newblock {\em Electron. Comm. Probab.}, 11:64--77 (electronic), 2006.

\bibitem{grossbook}
L.~Gross.
\newblock Logarithmic {S}obolev inequalities and contractivity properties of
  semi-groups. in {D}irichlet forms. {D}ell'{A}nt\-onio and {M}osco eds.
\newblock {\em Lect. Notes Math.}, 1563:54--88, 1993.

\bibitem{Has}
R.~Z. Has'minskii.
\newblock {\em Stochastic stability of differential equations.}, volume~7 of
  {\em Monographs and Textbooks on Mechanics of Solids and Fluids: Mechanics
  and Analysis,}.
\newblock Sijthoff and Noordhoff, 1980.

\bibitem{KKR}
O.~Kavian, G.~Kerkyacharian, and B.~Roynette.
\newblock Quelques remarques sur l'ultracontractivit\'e.
\newblock {\em J. Func. Anal.}, 11:155--196, 1993.

\bibitem{kulik}
A.~M. Kulik.
\newblock Poincar\'e inequality and exponential integrability of the hitting
  times of a {M}arkov process.
\newblock {\em Theory Stoch. Process.}, 17(2):71--80, 2011.

\bibitem{LO00}
R.~Lata{\l}a and K.~Oleszkiewicz.
\newblock Between {S}obolev and {P}oincar\'e. in geometric aspects of
  {F}unctional {A}nalysis.
\newblock {\em Lect. Notes Math.}, 1745:147--168, 2000.

\bibitem{ledoux}
M.~Ledoux.
\newblock Concentration of measure and logarithmic {S}obolev inequalities.
\newblock In {\em S\'eminaire de Probabilit\'es XXXIII}, volume 1709 of {\em
  Lecture Notes in Math.}, pages 120--216. Springer, Berlin, 1999.

\bibitem{ledoux01}
M.~Ledoux.
\newblock {\em The concentration of measure phenomenon}, volume~89 of {\em
  Mathematical Surveys and Monographs}.
\newblock American Mathematical Society, Providence, RI, 2001.

\bibitem{liu2}
Y.~Liu.
\newblock A link between the log-{S}obolev inequality and {L}yapunov condition.
\newblock ArXiv Math.PR 1410-6080., 2014.

\bibitem{liu1}
Y.~Liu.
\newblock Gaussian integrability of distance function under the {L}yapunov
  condition.
\newblock {\em Electron. Commun. Probab.}, 20(9), 2015.

\bibitem{lll2}
E.~L\"{o}cherbach, O.~Loukianov, and D.~Loukianova.
\newblock Polynomial bounds in the ergodic theorem for one-dimensional
  diffusions and integrability of hitting times.
\newblock {\em Ann. Inst. Henri PoincarŽ Probab. Stat.}, 47(2):425--449, 2011.

\bibitem{lll1}
E.~L\"{o}cherbach, O.~Loukianov, and D.~Loukianova.
\newblock Spectral condition, hitting times and {N}ash inequality.
\newblock {\em Ann. Inst. Henri PoincarŽ Probab. Stat.}, 50(4):1213--1230,
  2014.

\bibitem{ll}
E.~L\"{o}cherbach and D.~Loukianova.
\newblock Polynomial deviation bounds for recurrent {H}arris processes having
  general state space.
\newblock {\em ESAIM Probab. Stat.}, 17:195--218, 2013.

\bibitem{Mar96}
K.~Marton.
\newblock Bounding {$\overline d$}-distance by informational divergence: a
  method to prove measure concentration.
\newblock {\em Ann. Probab.}, 24(2):857--866, 1996.

\bibitem{MT}
S.~P. Meyn and R.~L. Tweedie.
\newblock {\em Markov chains and stochastic stability}.
\newblock Communications and Control Engineering Series. Springer-Verlag London
  Ltd., London, 1993.

\bibitem{MT2}
S.~P. Meyn and R.~L. Tweedie.
\newblock Stability of markovian processes {II}: continuous-time processes and
  sampled chains.
\newblock {\em Adv. Appl. Proba.}, 25:487--517, 1993.

\bibitem{MT3}
S.~P. Meyn and R.~L. Tweedie.
\newblock Stability of markovian processes {III}: {F}oster-{L}yapunov criteria
  for continuous-time processes.
\newblock {\em Adv. Appl. Proba.}, 25:518--548, 1993.

\bibitem{OV}
F.~Otto and C.~Villani.
\newblock Generalization of an inequality by {T}alagrand and links with the
  logarithmic {S}obolev inequality.
\newblock {\em J. Funct. Anal.}, 173(2):361--400, 2000.

\bibitem{RZ}
C.~Roberto and B.~Zegarlinski.
\newblock {Orlicz-Sobolev} inequalities for sub-{G}aussian measures and
  ergodicity of {M}arkov semi-groups.
\newblock {\em J. Func. Anal.}, 243(1):28--66, 2007.

\bibitem{V97}
A.~Yu. Veretennikov.
\newblock On polynomial mixing bounds for stochastic differential equations.
\newblock {\em Stochastic Process. Appl.}, 70(1):115--127, 1997.

\bibitem{Wbook}
F.~Y. Wang.
\newblock {\em Functional inequalities, {M}arkov processes and {S}pectral
  theory}.
\newblock Science Press, Beijing, 2005.

\bibitem{wu1}
L.~Wu.
\newblock Large and moderate deviations and exponential convergence for
  stochastic damping {H}amiltonian systems.
\newblock {\em Stochastic Process. Appl.}, 91(2):205--238, 2001.

\end{thebibliography}

\end{document}